\DeclareMathOperator*{\restprod}%
 {\mathchoice{\ooalign{\ensuremath{\displaystyle\prod}\crcr\ensuremath{\displaystyle\coprod}}}%
             {\ooalign{\ensuremath{\textstyle\prod}\crcr\ensuremath{\textstyle\coprod}}}%
             {\ooalign{\ensuremath{\scriptstyle\prod}\crcr\ensuremath{\scriptstyle\coprod}}}%
             {\ooalign{\ensuremath{\scriptscriptstyle\prod}\crcr\ensuremath{\scriptscriptstyle\coprod}}}%
 }
\theoremstyle{definition}
\newtheorem{dfn}{Definition}[section]
\newtheorem{exm}[dfn]{Example}
\newtheorem{rmk}[dfn]{Remark}
\newtheorem{pbm}[dfn]{Problem}
\theoremstyle{plain}
\newtheorem{prop}[dfn]{Proposition}
\newtheorem{lem}[dfn]{Lemma}
\newtheorem{thm}[dfn]{Theorem}
\newtheorem{cor}[dfn]{Corollary}
\def \O{\mathcal{O}}
\def \GKab{G_K^{\mathrm{ab}}}
\def \P{\mathcal{P}}
\def \p{\mathfrak{p}}
\def \m{\mathfrak{m}}
\def \a{\mathfrak{a}}
\def \b{\mathfrak{b}}
\def \Af{\mathbb{A}_{K,f}}
\def \R{\mathbb{R}}
\def \T{\mathbb{T}}
\def \Q{\mathbb{Q}}
\def \C{\mathbb{C}}
\def \Z{\mathbb{Z}}
\def \N{\mathbb{N}}
\def \H{\mathcal{H}}
\def \Prim{\mathrm{Prim}}
\title{Irreducible Representations of Bost-Connes systems}
\author{Takuya Takeishi}
\address{Department of Mathematical Sciences, University of Tokyo}
\email{takeishi@ms.u-tokyo.ac.jp}
\date{}
\begin{document}

\begin{abstract}
The classification problem of Bost-Connes systems was studied by Cornellissen and Marcolli partially, but still remains unsolved. 
In this paper, we will give a representation-theoretic approach to this problem. 
We generalize the result of Laca and Raeburn, which concerns with the primitive ideal space on the Bost-Connes system for $\mathbb{Q}$.  
As a consequence, the Bost-Connes $C^*$-algebra for a number field $K$ has $h_K^1$-dimensional irreducible representations and does not have 
finite-dimensional irreducible representations for the other dimensions, where $h_K^1$ is the narrow class number of $K$. 
In particular, the narrow class number is an invariant of Bost-Connes $C^*$-algebras. 
\end{abstract}

\maketitle

\section{Introduction}
For an arbitrary number field $K$, a $C^*$-dynamical system $(A_K,\sigma_{t,K})$ is defined 
in the work of Ha-Paugam \cite{HP}, Laca-Larsen-Neshveyev \cite{LLN} and Yalkinoglu \cite{Y}. 
The $C^*$-dynamical system $(A_K,\sigma_{t,K})$ is related to class field theory. 
It is called the {\it Bost-Connes system}, after Bost and Connes \cite{BC}, 
who defined such a system for the special case of $K=\Q$. 
It was a longstanding open problem to generalize Bost-Connes systems to arbitrary number fields, 
but that problem has been solved in recent years by the efforts of many researchers 
(especially, Yalkinoglu's work \cite{Y} was the last piece). 
So it is a good moment to start the investigation of those $C^*$-dynamical systems from both number theoretic and operator algebraic viewpoints. 
The operator algebraic viewpoint naturally asks for the classification of Bost-Connes systems. 
Concretely, we are interested in the following problem: 
\begin{pbm} \label{pbm1}
Does an $\R$-equivariant isomorphism of $(A_K,\sigma_{t,K})$ and $(A_L,\sigma_{t,L})$ imply an isomorphism of $K$ and $L$ ?
\end{pbm} 

This problem was studied by Cornelissen and Marcolli \cite{M} under the condition of preserving {\it the daggered subalgebras}, 
which has more information of number theoretic things. 
Without any additional assumption, works in the direction of the full classification tries to recover number theoretic invariants from Bost-Connes systems.  
The best known result is the classification theorem of the KMS-states by Laca-Larsen-Neshveyev \cite{LLN}, 
obtaining the Dedekind zeta function $\zeta_K(s)$ from the partition function of $(A_K,\sigma_{t,K})$. 
In particular, Problem \ref{pbm1} is true if $[K:\Q]\leq 6$ or $[L:\Q]\leq 6$, thanks to the work of R. Perlis \cite{P}. 

In this paper, we provide a new invariant of Bost-Connes systems in Theorem~\ref{classnum}, that is, the narrow class number $h_K^1$.  
The narrow class number measures the distance of the integer ring $\O_K$ from being a principal ideal domain, and some information of infinite primes is added. 
Hence, in principle, it is an independent invariant from the zeta function, which collects the information of finite primes. 
Indeed, there is an example of a pair of number fields which have the same zeta function but different narrow class numbers (Remark~\ref{example}). 
The difference between the Dedekind zeta function and the narrow class number can be viewed from an operator algebraic perspective. 
Since the flow $\sigma_{t,K}$ on $A_K$ is determined by the norms of primes, we know the information of primes by looking at flows. 
Looking at the algebra itself, we get the information which is orthogonal to finite primes. 
In particular, our theorem actually provides an invariant for $C^*$-algebras $A_K$. 

In order to prove Theorem~\ref{classnum}, we examine the primitive ideal space of $A_K$. 
There is a result of Laca and Raeburn \cite{LR} determining the primitive ideal space of the original Bost-Connes $C^*$-algebra $A_{\Q}$. 
The key ingredient in that work was Williams' Theorem~\cite{W}, 
which is a structure theorem of the primitive ideal space for group crossed products by abelian groups. 
That theorem also plays an important role in this paper.  
As a complementary result, we also determine the primitive ideal space of $A_K$ (Theorem~\ref{structure}), which is a generalization of the work of 
Laca and Raeburn. 

Looking at flows on the primitive ideal space, we get another invariant $(\hat{P}_K^1,\sigma_{t,K})$, 
which is a dynamical system on the infinite-dimensional torus (Proposition~\ref{homeo}). 
We can also recover the norm map on $P_K^1$ from that dynamical system (Theorem~\ref{isom}). 
This is a sort of results like reconstructing the norm map on the whole ideal group $J_K$, 
which amounts to the reconstruction of the zeta function by \cite{LLN}, but from a different perspective. 

\section{Preliminary} \label{Pre}

In this section, we recall the definition of Bost-Connes systems and summarize general facts and observations which are needed to investigate the primitive ideal space. For the investigation of primitive ideals, we adopt the same strategy as in the case of $\Q$ (cf. \cite{LR}).

\subsection{Definition of Bost-Connes systems} \label{def}
In this section, we quickly review the definition of the Bost-Connes system of a number field. 
The reader can also consult \cite[p.388]{Y} for the construction of the Bost-Connes system. 
Throughout this paper, $J_K$ denotes the ideal group of $K$ and $I_K$ denotes the ideal semigroup of $K$. 
The finite ad\'ele ring is denoted by $\Af$ and the finite id\'ele group is denoted by $\Af^*$ (for the definition, see e.g. \cite{LNT}). 

Let $K$ be a number field. Put 
\begin{equation*}
Y_K = \hat{\O}_K \times_{\hat{O}_K^*} \GKab,
\end{equation*}
where $\hat{\O}_K$ is the profinite completion of $\O_K$, and $\hat{\O}_K^*$ acts on $\hat{\O}_K \times \GKab$ by 
\begin{equation*}
s\cdot(\rho,\alpha) = (\rho s , [s]_K^{-1} \alpha)
\end{equation*}
for $\rho \in \hat{\O}_K,\alpha \in \GKab$ and $s \in \hat{\O}_K^*$, where $[\cdot]_K$ is the Artin reciprocity map. 
Let $\a \in I_K$ and take a finite id\'ele $a \in \Af^* \cap \hat{\O}_K$ such that $\a = (a)$. 
The action of $I_K$ on $Y_K$ is given by 
\begin{equation*}
\a \cdot [\rho,\alpha] = [\rho a, [a]_K^{-1}\alpha].
\end{equation*}

Let $A_K = C(Y_K)\rtimes I_K$. Define an $\R$-action on $A_K$ by 
\begin{equation*}
\sigma_{t,K}(f) = f,\ \sigma_{t,K}(\mu_{\a}) = N(\a)^{it}\mu_{\a}
\end{equation*}
for $f \in C(Y_K), \a \in I_K$ and $t \in \R$, where $N(\cdot)$ is the ideal norm. 
\begin{dfn}
The system $(A_K,\sigma_{t,K})$ is called the Bost-Connes system for $K$.
\end{dfn}

It is convenient to extend the Bost-Connes system to a non-unital group crossed product. 
Let 
\begin{equation*}
X_K = \Af \times_{\hat{\O}_K^*} \GKab
\end{equation*}
and define the action of $J_K$ on $X_K$ in the same way. 
Let $\tilde{A}_K = C_0(X_K)\rtimes J_K$. 
Then $A_K$ is a full corner of $\tilde{A}_K$. Namely, we have $A_K = 1_{Y_K}\tilde{A}_K1_{Y_K}$. 
The $\R$-action on $\tilde{A}_K$ is defined in the same way, which is also denoted by $\sigma_{t,K}$. 

For convenience, we fix notations of subspaces of $X_K$ and $Y_K$. 
Define four subspaces by 
\begin{eqnarray*}
&& Y_K^* = \hat{\O}_K^* \times_{\hat{\O}_K^*} \GKab \cong \GKab,\\
&& X_K^0 = \{0\} \times_{\hat{\O}_K^*} \GKab \cong \GKab/[\hat{\O}_K^*]_K, \\
&& X_K^{\natural} = (X_K \setminus \{0\}) \times_{\hat{\O}_K^*} \GKab, \\
&& Y_K^{\natural} = (Y_K \setminus \{0\}) \times_{\hat{\O}_K^*} \GKab .
\end{eqnarray*}

\subsection{Dynamics on $\hat{P}_K^1$} \label{predyn}
Since we use the dynamics on $\hat{P}_K^1$ later, we prepare it in advance. 
We fix a notation of a dynamical system on a torus. For a (finite or infinite) sequence of positive numbers $\{r_j\}$, $(\prod_j \T_j, \prod_j r_j^{it})$ 
denotes the dynamical system determined by 
\begin{equation*}
\sigma_t((x_j)_j) = (r_j^{it}x_j)_j
\end{equation*}
for $x_j \in \T$ and $t \in \R$. 

Let $K$ be a number field and $P_K^1$ denote the group of principal ideals generated by totally positive elements 
(i.e., $P_K^1 \cong K^*_+/\O_{K,+}^*$). 
We consider an action of $\R$ on $\hat{P}_K^1$ (as a topological space) defined by 
\begin{equation*}
\langle x, \sigma_t(\gamma) \rangle = N(x)^{it} \langle x,\gamma \rangle 
\end{equation*}
for any $x \in P_K^1$, $\gamma \in \hat{P}_K^1$ and $t \in \R$, where $\hat{P}_K^1$ is the Pontrjagin dual of $P_K^1$. 
Note that $P_K^1$ is a free abelian group, since it is a subgroup of the free abelian group $J_K$. 
Hence $\hat{P}_K^1$ is isomorphic to the infinite product of circles. 
If $\{a_j\}$ is a basis of $P_K^1$, then the dynamical system 
$(\hat{P}_K^1,\sigma)$ is conjugate to $(\prod_j \T_j, \prod_jN(a_j)^{it})$.

\subsection{$\R$-equivariant imprimitivity bimodules} \label{imp}
\begin{dfn}
Let $(A,\sigma_t^A)$ and $(B,\sigma_t^B)$ be $C^*$-dynamical systems. 
An $(A,B)$-imprimitivity bimodule $E$ is said to be an {\it $\R$-equivariant imprimitivity bimodule} if 
there is a one-parameter group of isometries $U_t$ on $E$ such that
\begin{itemize}
\item $_A\langle U_t \xi, U_t \eta \rangle = \sigma_t(_A\langle \xi, \eta\rangle)$
\item $\langle U_t \xi, U_t \eta \rangle_B = \sigma_t(\langle \xi, \eta\rangle_B)$
\end{itemize}
for any $\xi,\eta \in E_{\p}$ and $t \in \R$. 

If there exists an $\R$-equivariant imprimitivity bimodule, then the two $C^*$-dynamical systems are said to be 
\it{$\R$-equivariantly Morita equivalent}. 
\end{dfn}

Note that from the above axioms we have 
\begin{equation*}
\sigma_t^A (a) U_t(\xi) = U_t(a\xi),\ U_t(\xi)\sigma_t^B (b) = U_t(\xi b)
\end{equation*}
for any $a \in A, b \in B$ and $\xi \in E$. 

\begin{lem}
For a number field $K$, the Bost-Connes system $(A_K,\sigma_{t,K})$ is $\R$-equivariantly Morita equivalent to $(\tilde{A}_K, \sigma_{t,K})$.
\end{lem}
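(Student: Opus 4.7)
The plan is to exhibit the standard corner bimodule $E = \tilde{A}_K\, 1_{Y_K}$ as an $\R$-equivariant imprimitivity bimodule between $(\tilde{A}_K, \sigma_{t,K})$ and $(A_K, \sigma_{t,K})$. Note that $1_{Y_K} \in C_0(X_K) \subset \tilde{A}_K$ makes sense because $Y_K$ is a clopen subset of $X_K$ (it is the preimage of $\hat{\O}_K$ under the quotient map $\Af \times \GKab \to X_K$). Since the definition of $\sigma_{t,K}$ fixes $C_0(X_K)$ pointwise, $\sigma_{t,K}(1_{Y_K}) = 1_{Y_K}$, and the corner $1_{Y_K}\tilde{A}_K 1_{Y_K}$ is exactly $A_K$.

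First I would check the usual corner-bimodule structure on $E = \tilde{A}_K 1_{Y_K}$: the left $\tilde{A}_K$-action by multiplication, the right $A_K$-action by multiplication (well-defined since $1_{Y_K}$ commutes with $A_K$ on the right), and the inner products
\begin{equation*}
{}_{\tilde{A}_K}\langle x,y \rangle = xy^*, \qquad \langle x,y \rangle_{A_K} = x^*y.
\end{equation*}
The right-fullness $\langle E, E \rangle_{A_K} = A_K$ is immediate from $1_{Y_K}(\tilde{A}_K)^* \tilde{A}_K 1_{Y_K} = A_K$ (approximate units), so the actual content is the left-fullness, i.e.\ that $1_{Y_K}$ is a full projection in $\tilde{A}_K$.

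For left-fullness I would argue that every fractional ideal $\a \in J_K$ can be written as $\a = \b \c^{-1}$ with $\b,\c \in I_K$, so that $\mu_\a = \mu_\b \mu_\c^*$ already lies in the ideal generated by $I_K$; meanwhile $X_K$ is exhausted by the translates $\a \cdot Y_K$ for $\a \in J_K$ (since $\Af = \bigcup_{\a \in J_K} \a \hat{\O}_K$). Thus, for $f \in C_c(X_K)$, a partition of unity subordinate to finitely many translates $\a_i \cdot Y_K$ lets me write $f$ as a finite sum of elements each of the form $\mu_{\a_i} g_i \mu_{\a_i}^*$ with $g_i \in 1_{Y_K} C_0(X_K) 1_{Y_K}$, placing $f$ in the ideal generated by $1_{Y_K}$. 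Combined with the remark on $\mu_\a$, this shows $\tilde{A}_K 1_{Y_K} \tilde{A}_K$ is dense in $\tilde{A}_K$. This is the main technical step.

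Finally, I would set $U_t(x) = \sigma_{t,K}(x)$ for $x \in E$. Since $\sigma_{t,K}(1_{Y_K}) = 1_{Y_K}$ we have $U_t(E) \subset E$, and $U_t$ is an isometric one-parameter group inherited from the strongly continuous $\R$-action on $\tilde{A}_K$. The two equivariance identities
\begin{equation*}
{}_{\tilde{A}_K}\langle U_tx, U_ty\rangle = \sigma_{t,K}(xy^*) = \sigma_{t,K}({}_{\tilde{A}_K}\langle x,y\rangle),
\end{equation*}
\begin{equation*}
\langle U_tx, U_ty\rangle_{A_K} = \sigma_{t,K}(x^*y) = \sigma_{t,K}(\langle x,y\rangle_{A_K})
\end{equation*}
follow immediately from the fact that $\sigma_{t,K}$ is a $*$-automorphism. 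This completes the verification that $E$ is an $\R$-equivariant $(\tilde{A}_K, A_K)$-imprimitivity bimodule, so the two systems are $\R$-equivariantly Morita equivalent.
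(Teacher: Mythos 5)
Your proposal is correct and follows essentially the same route as the paper: the corner imprimitivity bimodule associated to the projection $1_{Y_K}$, with $U_t$ obtained by restricting $\sigma_{t,K}$ (which fixes $1_{Y_K}$), and the equivariance identities checked directly. The only difference is that you supply a partition-of-unity argument for the fullness of $1_{Y_K}$, a fact the paper simply records in Section~\ref{def} ("$A_K$ is a full corner of $\tilde{A}_K$") and then invokes; your verification of it is sound.
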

\begin{proof}
Since $A_K = 1_{Y_K}\tilde{A}_K1_{Y_K}$ and $1_{Y_K}$ is a full projection, 
the $(A_K,\tilde{A}_K)$ bimodule $E=1_{Y_K}\tilde{A}_K$ is an imprimitivity bimodule. 
Define a one-parameter group of isometries $U_t$ on $E$ by restricting the time-evolution of $\tilde{A}_K$. 
Then $U_t$ satisfies the desired property. 
\end{proof} 

If two $C^*$-algebras are Morita equivalent, then we have natural correspondences between their representations and ideals. 
As a consequence, their primitive ideal spaces are homeomorphic. The homeomorphism obtained in this way is called the {\it Rieffel homeomorphism} 
(cf.~\cite[Corollary 3.33]{RW}). 
We need an $\R$-equivariant version of this theorem. 
For a $C^*$-dynamical system $(A,\sigma_t)$, then we consider the $\R$-action on $\mathrm{Prim}A$ defined by 
\begin{equation*}
t \cdot \ker \pi = \ker (\pi \circ \sigma_t) = \sigma_{-t}(\ker \pi),
\end{equation*}
where $\pi$ is an irreducible representation of $A$. 

\begin{prop} \label{Morita}
Let $E$ be an $\R$-equivariant imprimitivity bimodule between two $C^*$-dynamical systems $(A,\sigma_t^A)$ and $(B,\sigma_t^B)$.
Then the Rieffel homeomorphism $h_X : \mathrm{Prim}B \rightarrow \mathrm{Prim}A$ is $\R$-equivariant. 
\end{prop}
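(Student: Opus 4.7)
The plan is to express the Rieffel homeomorphism in terms of induced representations and then deduce equivariance from the intertwining properties of $U_t$. Recall that if $\pi : B \to \mathcal{B}(H)$ is an irreducible representation with $\ker \pi = J$, then the induced representation $\mathrm{Ind}_E\, \pi$ of $A$ acts on the interior tensor product $E \otimes_\pi H$ by $a \cdot (\xi \otimes v) = (a\xi) \otimes v$, and the Rieffel homeomorphism is characterized by $h_X(J) = \ker(\mathrm{Ind}_E\, \pi)$ (cf.~\cite[Corollary 3.33]{RW}). Hence the equivariance of $h_X$ reduces to showing, for each $t \in \R$, that $\mathrm{Ind}_E(\pi \circ \sigma_t^B)$ and $(\mathrm{Ind}_E\, \pi) \circ \sigma_t^A$ are unitarily equivalent representations of $A$.

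To produce the intertwining unitary I would set
\begin{equation*}
V_t : E \otimes_{\pi \circ \sigma_t^B} H \longrightarrow E \otimes_\pi H, \qquad V_t(\xi \otimes v) = U_t(\xi) \otimes v.
\end{equation*}
Well-definedness on the balanced tensor product is a short check using the consequence $U_t(\xi b) = U_t(\xi)\, \sigma_t^B(b)$ of the right-hand axiom; the isometry property follows from $\langle U_t \xi, U_t \eta \rangle_B = \sigma_t^B(\langle \xi, \eta \rangle_B)$; and $V_t$ is surjective since $U_t$ is invertible. The intertwining identity
\begin{equation*}
V_t \, \mathrm{Ind}_E(\pi \circ \sigma_t^B)(a) \;=\; (\mathrm{Ind}_E\, \pi)(\sigma_t^A(a))\, V_t
\end{equation*}
is then immediate from the derived relation $U_t(a\xi) = \sigma_t^A(a)\, U_t(\xi)$ recorded after the definition of an $\R$-equivariant imprimitivity bimodule.

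Putting these together, $\ker(\mathrm{Ind}_E(\pi \circ \sigma_t^B)) = \ker((\mathrm{Ind}_E\, \pi) \circ \sigma_t^A) = \sigma_{-t}^A(\ker \mathrm{Ind}_E\, \pi)$, which translates via $h_X$ into $h_X(t \cdot \ker \pi) = t \cdot h_X(\ker \pi)$, i.e.~the desired equivariance. The only real obstacle is bookkeeping: one has to keep the two distinct $B$-module structures on $H$ straight when passing between the two tensor products, and choose the direction of $V_t$ so that both balancing relations and both inner products line up correctly. Once that is done, the verification uses nothing beyond the two defining axioms of an $\R$-equivariant imprimitivity bimodule and their standard consequences.
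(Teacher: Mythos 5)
Your proposal is correct and follows essentially the same route as the paper: both reduce the claim to a unitary equivalence between $\mathrm{Ind}_E(\pi \circ \sigma_t^B)$ on $E \otimes_{\pi\circ\sigma_t^B} \H_\pi$ and $(\mathrm{Ind}_E\,\pi)\circ\sigma_t^A$ on $E \otimes_\pi \H_\pi$, implemented by the same map $\xi \otimes v \mapsto U_t(\xi)\otimes v$. The only difference is that you spell out the well-definedness, isometry, and intertwining checks that the paper dismisses as ``easy to check.''
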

\begin{proof}
Let $(\pi,\H_{\pi})$ be a representation of $B$.
We need to show that the representation $(\mathrm{id}_A \otimes 1, E \otimes_{\pi \circ \sigma_t^B} \H_{\pi})$ is unitarily equivalent to 
$(\sigma_t^A \otimes 1, E \otimes_{\pi} \H_{\pi})$. Let $U_t$ be a one-parameter group of isometries on $E$ which gives $\R$-equivariance. 
Then it is easy to check that the unitary 
\begin{equation*}
E \otimes_{\pi \circ \sigma_t^B} \H_{\pi} \rightarrow E \otimes_{\pi} \H_{\pi},\ 
x \otimes_{\pi \circ \sigma_t^B} \xi \mapsto U_t(x) \otimes_{\pi} \xi
\end{equation*}
gives the unitary equivalence. 
\end{proof}

Note that the strong continuity of the one-parameter group of isometries $U_t$ 
is tacitly assumed in the definition of $\R$-equivariant imprimitivity bimodules. 
However, the strong continuity is not needed for the sake of Proposition~\ref{Morita}. 

\subsection{The Primitive ideal space of crossed products by abelian groups} \label{primitive}
In order to determine $\Prim A_K$, by Proposition~\ref{Morita}, we may investigate $\Prim \tilde{A}_K$ instead. 
We have a nice structure theorem of the primitive ideal space for group crossed products. 
Let $G$ be a countable abelian group acting on a second countable locally compact space $X$. 
Define an equivalence relation on $X \times \hat{G}$ by 
\begin{equation*}
(x,\gamma) \sim (y,\delta) \mbox{ if } \overline{Gx} = \overline{Gy} \mbox{ and } \gamma \delta^{-1} \in G_x^{\perp},
\end{equation*}
where $\hat{G}$ is the Pontrjagin dual of $G$ and $G_x$ is the isotropy group of $x$. 
For a representation $(\pi, \H_{\pi})$ of $A_x = C_0(X) \rtimes G_x$, 
$\mathrm{Ind}^G_{G_x} \pi$ denotes the induced representation of $A = C_0(X) \rtimes G$ on the Hilbert space $A \otimes_{A_x} \H_{\pi}$. 
\begin{thm}\label{Williams} {\rm (Williams, \cite[Theorem 8.39]{W2})} 
We have a homeomorphism $\Phi: X\times \hat{G} / \sim \rightarrow \mathrm{Prim} C_0(X) \rtimes G$ defined by 
\begin{equation*}
\Phi([x,\gamma]) = \ker (\mathrm{Ind}^G_{G_x} (\mathrm{ev}_x \rtimes \gamma |_{G_x})). 
\end{equation*}
\end{thm}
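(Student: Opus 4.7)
The plan is to apply the Mackey--Green--Rieffel induction machinery for $C^*$-crossed products, specialized to the abelian action of $G$ on the commutative algebra $C_0(X)$.

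First I would verify that $\Phi$ descends to the quotient $X \times \hat{G}/\sim$. For fixed $x$, the representation $\mathrm{ev}_x \rtimes (\gamma|_{G_x})$ of $C_0(X) \rtimes G_x$ depends only on the restriction $\gamma|_{G_x}$, i.e.\ only on the coset $\gamma G_x^\perp$, so changing $\gamma$ inside $G_x^\perp$ does not affect $\Phi([x,\gamma])$. The condition $\overline{Gx} = \overline{Gy}$ (quasi-orbit equivalence) forces both induced representations to annihilate the same $G$-invariant ideal $\{f \in C_0(X) : f|_{\overline{Gx}} = 0\} \rtimes G$, so after quotienting by this ideal one reduces to the case where $X$ is a single quasi-orbit.

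Next I would prove surjectivity. Given a primitive ideal $P$ of $C_0(X) \rtimes G$, its restriction to $C_0(X)$ is a $G$-invariant primitive ideal whose hull is a closed $G$-invariant subset of $X$; second countability pins this down to a single quasi-orbit closure $\overline{Gx}$. Passing to the corresponding quotient, Green's imprimitivity theorem identifies the irreducible representation with kernel $P$ as induced from some irreducible $\pi$ of $C_0(X) \rtimes G_x$. Since $G_x$ fixes $x$, such a $\pi$ factors through evaluation at $x$ and is determined by a character $\chi$ of $G_x$; Pontrjagin duality extends $\chi$ to some $\gamma \in \hat{G}$, yielding $P = \Phi([x,\gamma])$. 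Injectivity modulo $\sim$ is then the converse direction of the same intertwiner computation in Green's theorem, since distinct equivalence classes produce inequivalent induced representations, hence distinct kernels.

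Finally, that $\Phi$ is a homeomorphism is checked by comparing Fell (hull--kernel) convergence of primitive ideals with the quotient topology on $X \times \hat{G}/\sim$. Continuity of $\Phi$ itself is the easier direction and follows from joint continuity of the induction process. The hardest step will be continuity of $\Phi^{-1}$, together with the basepoint selection inside the surjectivity argument: quasi-orbits need not be locally closed, so upgrading a Fell-convergent sequence of primitive ideals to a convergent sequence of representatives $(x_n,\gamma_n)$ in $X\times\hat{G}$ requires a careful disintegration or measurable-selection argument on top of the purely algebraic induction machinery; this is where the second countability hypotheses do the real work.
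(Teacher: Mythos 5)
The paper offers no proof of this statement: it is quoted verbatim from Williams' book (\cite[Theorem 8.39]{W2}) and used as a black box, so there is nothing internal to compare your argument against. Judged on its own terms, your outline has the right overall shape (descend to the quotient, reduce to a single quasi-orbit, identify the fibre with $\hat{G}_x \cong \hat{G}/G_x^\perp$, then match topologies), but it contains two genuine gaps at exactly the points where the theorem is hard.

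First, surjectivity. You invoke Green's imprimitivity theorem to say that an irreducible representation living over the quasi-orbit of $x$ is induced from $C_0(X)\rtimes G_x$. Green's theorem gives this only when the orbit $G\cdot x$ is locally closed, so that $G\cdot x \cong G/G_x$ equivariantly and one can cut down to $C_0(G/G_x)\rtimes G$. In the situations this theorem is applied to (and in the Bost--Connes setting in particular), orbits are typically dense and not locally closed; a quasi-orbit is an orbit \emph{closure}, and a representation whose kernel restricts to the ideal of that closure need not be supported on the orbit itself. The assertion that every primitive ideal is nonetheless induced from a stability group is the generalized Effros--Hahn property; for abelian (more generally amenable) $G$ this is the Gootman--Rosenberg--Sauvageot theorem, or in Williams' treatment a separate direct argument exploiting the dual $\hat{G}$-action on $C_0(X)\rtimes G$. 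This is the real content of the theorem and cannot be obtained from Green's imprimitivity theorem alone. Second, injectivity. The inference ``distinct equivalence classes produce inequivalent induced representations, hence distinct kernels'' is a non sequitur: inequivalent irreducible representations of a non-Type-I algebra can very well have equal kernels, and indeed the present paper exhibits exactly this phenomenon (the representations $\pi_g$ are pairwise inequivalent yet all faithful). Injectivity of $\Phi$ requires computing the kernels $\ker(\mathrm{Ind}^G_{G_x}(\mathrm{ev}_x\rtimes\gamma|_{G_x}))$ directly and showing they determine $\overline{Gx}$ and the coset $\gamma G_x^\perp$, which is again nontrivial precisely because the induced representations are generally not Type I. Your remarks on the topological comparison correctly identify it as delicate, but the two algebraic steps above are where the proof would actually break down as written.
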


\begin{rmk} \label{open}
The quotient map $X \times \hat{G} \rightarrow X \times \hat{G} / \sim$ is an open map (cf. \cite[Remark 8.40]{W2}). 
This fact is useful to determine the topology of the primitive ideal space. 
\end{rmk}

In this section, we look into the dynamics of the primitive ideal space in a general setting. 
Let $N:G\rightarrow \R_+$ be a group homomorphism and define the time evolution on $A$ by 
\begin{equation*}
\sigma_t(fu_s) = N(s)^{it} fu_s
\end{equation*}
for any $f \in C_0(X), s \in G$ and $t \in \R$. Take $x \in X, \gamma \in \hat{G}$ and let $\pi = \mathrm{ev}_x \rtimes \gamma |_{G_x}$. 
Then $\pi_x$ defines a character of $A_x$. By \cite[Proposition 8.24]{W}, $\mathrm{Ind}^G_{G_x} \pi$ is unitarily equivalent to the representation 
$\pi_{x,\gamma}$ on $\H_{x,\gamma} = C^*(G)\otimes_{C^*(G_x)} \C$ defined by 
\begin{equation*}
\pi_{x,\gamma}(f) \xi_s = f(sx) \xi_s,\ \pi_{x,\gamma}(u_t) \xi_s = \xi_{ts}
\end{equation*}
for $f \in C_0(X)$ and $s,t \in G$. The inner product of $\H_{x,\gamma}$ is defined by 
\begin{eqnarray*}
\langle \xi_s, \xi_t \rangle = 
\left\{ \begin{array}{llll} \gamma (s^{-1}t) & \mbox{if } s^{-1}t \in G_x, \\
0 & \mbox{if } s^{-1}t \not\in G_x, \end{array} \right.
\end{eqnarray*}
for any $s,t \in G$. We would like to determine the representation $\pi_{x,\gamma} \circ \sigma_t$. 
We have $\pi_{x,\gamma} \circ \sigma_t (u_s) \xi_r = N(s)^{it} \xi_{sr}$.
Let $\tilde{\H} = \H_{x,\gamma}$ as a linear space. 
Define a linear map $U:\H_{x,\gamma} \rightarrow \tilde{\H}$ by 
\begin{equation*}
U(N(s)^{it}\xi_s) = \tilde{\xi}_s
\end{equation*}
for $s \in G$. To make $U$ a unitary, the inner product on $\tilde{\H}$ needs to be defined by 
\begin{equation*}
\langle \tilde{\xi}_s, \tilde{\xi}_r \rangle 
= \left\{ \begin{array}{llll} N(s^{-1}r)^{it} \gamma (s^{-1}r) & \mbox{if } s^{-1}r \in G_x, \\
0 & \mbox{if } s^{-1}r \not\in G_x. \end{array} \right.
\end{equation*}
Then we can see that $U \pi_{x,\gamma} \circ \sigma_t U^* = \pi_{x,\tilde{\gamma}}$, where $\tilde{\gamma} = N(\cdot)^{it}\gamma$. 
Thus we have the following proposition: 
\begin{prop} \label{dyn}
Let $A= C_0(X) \rtimes G$ and consider the $\R$-action on $\mathrm{Prim}A = X \times \hat{G} / \sim$ defined in {\rm Section \ref{imp}}
 (this action is also denoted by $\sigma$). 
Then we have 
\begin{equation*}
\sigma_t([x,\gamma]) = [x,N(\cdot)^{it}\gamma] 
\end{equation*}
for $[x, \gamma] \in X \times \hat{G} / \sim$. 
\end{prop}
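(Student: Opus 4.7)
The plan is to use the explicit model $(\pi_{x,\gamma},\H_{x,\gamma})$ of the induced representation set up just before the proposition, and to verify that conjugating by $\sigma_t$ yields, up to unitary equivalence, the representation corresponding to $[x, N(\cdot)^{it}\gamma]$. By Theorem~\ref{Williams}, once that unitary equivalence is in hand, it translates directly into the desired formula for the $\R$-action on $\mathrm{Prim}\,A$, since the $\R$-action was defined by $t\cdot \ker \pi = \ker(\pi\circ \sigma_t)$ and $\Phi$ is an $\R$-equivariant bijection onto $\mathrm{Prim}\,A$ when we put the induced action on the left-hand side.

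First I would note that, because $\sigma_t$ fixes $C_0(X)$ pointwise and multiplies $u_s$ by $N(s)^{it}$, the composition $\pi_{x,\gamma}\circ \sigma_t$ acts on $\H_{x,\gamma}$ exactly as $\pi_{x,\gamma}$ on $C_0(X)$, while $(\pi_{x,\gamma}\circ \sigma_t)(u_r)\xi_s = N(r)^{it}\xi_{rs}$ on the group generators, as already recorded in the discussion above. To compare with $\pi_{x,\tilde\gamma}$ where $\tilde\gamma = N(\cdot)^{it}\gamma$, I would then introduce the linear bijection $U:\H_{x,\gamma}\to\tilde\H$ defined by $U(N(s)^{it}\xi_s) = \tilde\xi_s$. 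A straightforward manipulation at the level of generators shows $U(\pi_{x,\gamma}\circ\sigma_t)U^* = \pi_{x,\tilde\gamma}$; the real work is in checking that $U$ is unitary.

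For unitarity, the idea is to pull the inner product back along $U$ and verify that it matches the inner product formula defining $\H_{x,\tilde\gamma}$ via Williams' model, namely $\langle \tilde\xi_s,\tilde\xi_r\rangle = \tilde\gamma(s^{-1}r)$ when $s^{-1}r\in G_x$ and zero otherwise. This reduces to the identity $N(s^{-1}r)^{it}\gamma(s^{-1}r) = \tilde\gamma(s^{-1}r)$, which is immediate from the multiplicativity of $N(\cdot)^{it}$ and the definition of $\tilde\gamma$. With the unitary equivalence $\pi_{x,\gamma}\circ\sigma_t \cong \pi_{x,\tilde\gamma}$ established, their kernels coincide, and applying $\Phi$ from Theorem~\ref{Williams} gives $\sigma_t([x,\gamma]) = [x, N(\cdot)^{it}\gamma]$.

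The main obstacle, such as it is, is purely bookkeeping: choosing the twist $N(s)^{it}$ in the definition of $U$ with the correct sign so that it simultaneously absorbs the factor from $\sigma_t(u_s)$ on the group side and produces the correct twisted inner product on $\tilde\H$. I would also want to briefly remark that $\tilde\gamma = N(\cdot)^{it}\gamma$ really is an element of $\hat G$ (which is clear because $N:G\to\R_+$ is a homomorphism), so that the right-hand side $[x,\tilde\gamma]$ is a legitimate class in $X\times \hat G/\sim$. Everything else follows formally from the definitions.
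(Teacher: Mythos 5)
Your proposal is correct and follows essentially the same route as the paper: the paper's argument (given in the discussion immediately preceding the proposition) uses exactly the same intertwiner $U(N(s)^{it}\xi_s)=\tilde{\xi}_s$, verifies unitarity via the identity $N(s^{-1}r)^{it}\gamma(s^{-1}r)=\tilde{\gamma}(s^{-1}r)$, and concludes $U\pi_{x,\gamma}\circ\sigma_t U^*=\pi_{x,\tilde{\gamma}}$ before invoking Williams' homeomorphism. No substantive differences.
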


The Bost-Connes systems for global fields are not Type I $C^*$-algebras, because it is known that they have type III$_1$ representations. 
So we cannot expect that Williams' theorem gives complete classification of irreducible representations. 
However, we can still get some information about irreducible representations, such as their dimensions. 
We will treat that in the next section. The following lemma will be used: 
\begin{lem} \label{dim}
For $(x,\gamma) \in X \times \hat{G}$, let $(\pi_{x,\gamma}, \H_{x,\gamma})$ be the representation of $A=C_0(X) \rtimes G$ defined as above. 
Then $\dim \H_{x,\gamma} = [G:G_x]$. In particular, $\pi_{x,\gamma}$ is finite-dimensional if and only if $G_x$ has a finite index in $G$. 
\end{lem}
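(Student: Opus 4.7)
The plan is to exhibit an explicit orthonormal basis of $\H_{x,\gamma}$ indexed by the coset space $G/G_x$, from which both assertions of the lemma are immediate. I would first fix a system of coset representatives $\{s_i\}_{i \in I}$ for $G/G_x$, so that $|I| = [G:G_x]$, and claim that $\{\xi_{s_i}\}_{i \in I}$ is an orthonormal basis.

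Orthonormality follows directly from the inner product formula displayed just before the lemma: for $i \neq j$, distinct coset representatives satisfy $s_i^{-1}s_j \notin G_x$, so $\langle \xi_{s_i}, \xi_{s_j}\rangle = 0$, while the diagonal entries are $\langle \xi_{s_i}, \xi_{s_i}\rangle = \gamma(e) = 1$. For the spanning part, I would use the definition $\H_{x,\gamma} = C^*(G) \otimes_{C^*(G_x)} \C$, where $\C$ carries the $C^*(G_x)$-module structure induced by the character $\gamma|_{G_x}$. For any $s \in G$, write $s = s_i t$ with $t \in G_x$; then
\[
\xi_s \;=\; u_s \otimes 1 \;=\; u_{s_i}u_t \otimes 1 \;=\; u_{s_i} \otimes \gamma(t)\cdot 1 \;=\; \gamma(t)\,\xi_{s_i}.
\]
Since the family $\{\xi_s : s \in G\}$ spans $\H_{x,\gamma}$ by construction, so does the subfamily $\{\xi_{s_i}\}_{i \in I}$. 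Combining the two parts, $\dim \H_{x,\gamma} = |G/G_x| = [G:G_x]$, and the finite-dimensional dichotomy is an immediate corollary.

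There is really no hard step here: the proof simply unpacks the definition of the induced Hilbert space and reads off a basis from a transversal of $G_x$ in $G$. The only mild subtlety worth double-checking is the consistency of the relation $\xi_{s_i t} = \gamma(t)\xi_{s_i}$ with the given inner product, which is automatic from the construction of the balanced tensor product $C^*(G) \otimes_{C^*(G_x)} \C$.
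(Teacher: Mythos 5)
Your proposal is correct and follows essentially the same route as the paper: both exhibit $\{\xi_{s_i}\}$ for a transversal of $G_x$ as an orthonormal basis and reduce spanning to the relation $\xi_{s_i t}=\gamma(t)\xi_{s_i}$. The only cosmetic difference is that you derive this relation from the balancing of the tensor product $C^*(G)\otimes_{C^*(G_x)}\C$, whereas the paper verifies it by pairing both sides against all vectors $\xi_s$ using the displayed inner product; these amount to the same thing.
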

\begin{proof}
Let $\{s_i\}$ be a complete representative of $G/G_x$. Then the family $\{\xi_{s_i}\}$ is orthogonal in $\H_{x,\gamma}$. 
We can see that $\{\xi_{s_i}\}$ is an orthogonal basis. In fact, we have $\xi_{s_it}=\gamma(t)\xi_{s_i}$ for $t \in G_x$ because
\begin{eqnarray*}
&& \langle \gamma(t)\xi_{s_i}, \xi_{s_ir}\rangle = \gamma (t^{-1}r) = \langle \xi_{s_it}, \xi_{s_ir} \rangle, \\
&& \langle \gamma(t)\xi_{s_i}, \xi_{s_jr}\rangle = 0 = \langle \xi_{s_it}, \xi_{s_jr} \rangle,
\end{eqnarray*}
for $t,r \in G_x$ and $j \neq i$. 
\end{proof}

\begin{rmk} \label{canonical}
In fact, there is a canonical orthonormal basis of $\H_{x,\gamma}$. 
If $\{s_i\}$ is a complete set of representatives of $G/G_x$, then the family $\{ \gamma(s_i^{-1})\xi_{s_i} \}$ is an orthonormal basis and 
independent of the choice of $\{s_i\}$.
\end{rmk}

We need to study the dimensions of irreducible representations. 
Clearly, if $E$ is an $(A,B)$-imprimitivity bimodule and $\pi$ is a finite-dimensional representation of $B$, 
$E\mathrm{-Ind} \pi$ may be infinite-dimensional (e.g., $A=\mathbb{K}(\H)$ and $B=\C$). 
However, we have the following criterion in our case. 
\begin{lem} \label{crit}
Let $A$ be a $C^*$-algebra and $e \in A$ be a full projection and Let $E=eA$ be the natural $(eAe,A)$-imprimitivity bimodule. 
Let $\pi$ be a non-degenerate representation of $A$. 
Then $E\mathrm{-ind} \pi$ is unitarily equivalent to $(\pi|_{eAe}, \pi(e)\H)$. 
In particular, $\dim (E\mathrm{-ind} \pi) = \dim \pi(e)\H$. 
\end{lem}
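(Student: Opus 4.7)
The plan is to construct an explicit unitary $U: E\otimes_A \H \to \pi(e)\H$ implementing the equivalence, by the natural formula $U(ea\otimes \xi) = \pi(ea)\xi$. With the standard convention that the right $A$-valued inner product on $E = eA$ is $\langle ea, eb\rangle_A = (ea)^*(eb) = a^*eb$, the pre-inner product on the algebraic tensor product defining $E\text{-}\mathrm{ind}\,\pi$ is
\begin{equation*}
\langle ea\otimes \xi,\, eb\otimes \eta\rangle = \langle \pi(\langle eb, ea\rangle_A)\xi, \eta\rangle = \langle \pi(b^*ea)\xi, \eta\rangle.
\end{equation*}
First I would check that $U$ is isometric on the algebraic tensor product: a one-line computation gives $\langle \pi(ea)\xi, \pi(eb)\eta\rangle = \langle \pi((eb)^*(ea))\xi,\eta\rangle = \langle \pi(b^*ea)\xi,\eta\rangle$, matching the displayed formula. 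Hence $U$ descends to and extends to an isometry on the Hilbert space completion.

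Next I would establish that the range of $U$ is dense in $\pi(e)\H$. Non-degeneracy of $\pi$ says $\pi(A)\H$ is dense in $\H$, so $\pi(e)\pi(A)\H = \pi(eA)\H$ is dense in $\pi(e)\H$; but this is exactly the range of $U$ on elementary tensors. An isometry with dense range is unitary, so $U$ is a unitary.

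Finally I would verify the intertwining property with the left $eAe$-action, which is immediate: for $eae \in eAe$,
\begin{equation*}
U\bigl((eae)\cdot(eb\otimes \xi)\bigr) = U(eaeb\otimes \xi) = \pi(eaeb)\xi = \pi(eae)\,U(eb\otimes \xi).
\end{equation*}
This shows $U$ conjugates $E\text{-}\mathrm{ind}\,\pi$ to $\pi|_{eAe}$ on $\pi(e)\H$, and the dimension statement follows at once. There is no serious obstacle; one only needs to keep the conventions for the two inner products on $E = eA$ straight and invoke non-degeneracy of $\pi$ to get the density needed for unitarity.
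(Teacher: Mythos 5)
Your proposal is correct and uses exactly the same unitary $ea\otimes\xi\mapsto\pi(ea)\xi$ as the paper; the paper simply asserts that this map gives the equivalence, while you have supplied the routine verifications (isometry, density of the range via non-degeneracy, and the intertwining with the $eAe$-action).
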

\begin{proof}
The unitary
\begin{equation*}
eA \otimes_A \H_{\pi} \rightarrow \pi(e)\H_{\pi},\ ea\otimes \xi \mapsto \pi(ea)\xi
\end{equation*}
gives the desired unitary equivalence. 
\end{proof}

\section{Irreducible representations of Bost-Connes systems}
Hereafter, we restrict our attention to the case of Bost-Connes systems. 
We determine the structure of the primitive ideal space of $A_K$, 
investigate several examples of irreducible representations and determine the induced action of $\R$ on that space.

\subsection{Extraction of the narrow class number}
First, we prepare some arithmetic lemmas. 
For a number field $K$, $\O_{K,+}$ denotes the set of totally positive integers of $K$ 
and $U_{K,+}$ denotes the closure of $\O_{K.+}$ in $\hat{\O}_K^*$. 
The narrow ideal class group of $K$ is denoted by $C_K^1=J_K/P_K^1$. 
The following two lemmas are essentially contained in \cite[Proposition 1.1]{LNT}

\begin{lem}
The reciprocity map $[\cdot]_K:\mathbb{A}_K^*\rightarrow \GKab$ induces the isomorphism $\Af^*/\overline{K_+^*}\cong \GKab$, 
where $\overline{K_+^*}$ is the closure of $K_+^*$ in $\Af^*$. 
\end{lem}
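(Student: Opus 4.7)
The plan is to factor the claimed isomorphism through the composition
\[
\iota\colon \Af^* \hookrightarrow \mathbb{A}_K^* \xrightarrow{[\cdot]_K} \GKab,
\]
where the first map sends $a \mapsto (a, 1)$ with trivial archimedean component. The statement then reduces to showing that $\iota$ is surjective with kernel exactly $\overline{K_+^*}$.

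For surjectivity, I would invoke the classical description of the kernel of the global reciprocity map $[\cdot]_K \colon \mathbb{A}_K^* \to \GKab$ as $\overline{K^* \cdot (K_\infty^*)^0}$, where we decompose $\mathbb{A}_K^* = \Af^* \times K_\infty^*$ with $K_\infty^*$ the archimedean factor and $(K_\infty^*)^0$ its identity component. Given any id\`ele $(a_f, a_\infty)$, weak approximation supplies an $x \in K^*$ whose sign at each real place matches that of $a_\infty$, so that $x^{-1}a_\infty \in (K_\infty^*)^0$. Then $(a_f, a_\infty)$ and $(x^{-1}a_f, 1)$ have the same image in $\GKab$, and the latter is $\iota(x^{-1} a_f)$.

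For the kernel, suppose $a \in \Af^*$ satisfies $\iota(a) = 1$, so $(a, 1)$ lies in $\overline{K^* \cdot (K_\infty^*)^0}$. Approximating by elements $x \cdot u$ with $x \in K^*$ embedded diagonally and $u \in (K_\infty^*)^0$, the archimedean component $x u$ must tend to $1$, which forces the approximating $x$ to eventually have positive sign at every real place, i.e.\ $x \in K_+^*$; meanwhile the finite component is exactly $x$ and tends to $a$, giving $a \in \overline{K_+^*}$. The reverse inclusion is immediate: for totally positive $x \in K^*$ one has $(x, 1) = (x, x) \cdot (1, x^{-1}) \in K^* \cdot (K_\infty^*)^0$, so $\iota(x) = 1$, and passing to closures yields $\overline{K_+^*} \subseteq \ker \iota$.

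The hard part will be pinning down the closure step carefully: I must ensure that the trace of $\overline{K^* \cdot (K_\infty^*)^0}$ on $\Af^* \times \{1\}$ is precisely $\overline{K_+^*} \times \{1\}$ rather than a strictly larger set, which relies on the discreteness of $K^*$ in $\mathbb{A}_K^*$ modulo the appropriate compact-open subgroup. Since this is essentially the content of \cite[Proposition~1.1]{LNT} cited alongside the statement, I would appeal to that reference for the delicate topological bookkeeping.
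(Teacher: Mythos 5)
The paper gives no proof of this lemma at all: it is recorded, together with Lemma 2.4, as ``essentially contained in \cite[Proposition 1.1]{LNT}''. Your proposal therefore supplies an argument where the paper only supplies a citation, and the argument is correct: factoring through $\iota\colon \Af^*\to\GKab$, invoking the global class field theory description $\ker[\cdot]_K=\overline{K^*(K_\infty^*)^0}$, using weak approximation for surjectivity, and analysing signs at the real places for the kernel is exactly the standard derivation. One comment: the ``delicate topological bookkeeping'' you defer back to \cite{LNT} at the end is in fact already disposed of by your own sign argument. If $x_n u_n\to(a,1)$ with $x_n\in K^*$ diagonal and $u_n\in(K_\infty^*)^0$, then since each $u_n$ is positive at every real place and there are only finitely many real places, the archimedean convergence $x_{n,\infty}u_n\to 1$ forces $x_n$ to be eventually totally positive, while the continuous projection $\mathbb{A}_K^*\to\Af^*$ gives $x_n\to a$ there; no appeal to discreteness of $K^*$ is needed beyond the quoted description of the kernel. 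The only point worth adding is that, since the lemma is used to identify topological spaces, one should note that the induced bijection $\Af^*/\overline{K_+^*}\to\GKab$ is a homeomorphism; this follows from the open mapping theorem for $\sigma$-compact locally compact groups, or directly from the topological isomorphism $C_K/D_K\cong\GKab$ together with the openness of $\Af^*\times(K_\infty^*)^0$ in $\mathbb{A}_K^*$.
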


\begin{lem} \label{fund}
The sequence
\begin{eqnarray*}
\xymatrix{
1 \ar[r] & U_K^+ \ar[r] & \hat{\O}_K^* \ar[r] & \Af^*/\overline{K_+^*} \ar[r] & C_K^1 \ar[r] & 1
}
\end{eqnarray*}
is exact. 
\end{lem}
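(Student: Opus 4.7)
The plan is to verify exactness at each term of the sequence. The map $\hat{\O}_K^* \to \Af^*/\overline{K_+^*}$ is the natural inclusion followed by the quotient, and the map to $C_K^1$ is induced by the content map $c:\Af^* \to J_K$, $a \mapsto \prod_{\p}\p^{v_\p(a)}$. Since $c$ is continuous into the discrete group $J_K$ and sends $K_+^*$ into $P_K^1$, it descends to a well-defined map $\bar c:\Af^*/\overline{K_+^*} \to C_K^1$. Injectivity at $U_K^+$ is immediate from the definitions.

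For exactness at $\hat{\O}_K^*$, I will establish $\hat{\O}_K^* \cap \overline{K_+^*} = U_K^+$. The containment $\supseteq$ follows from $\O_{K,+}^* \subset K_+^* \cap \hat{\O}_K^*$ together with the fact that the inclusion $\hat{\O}_K^* \hookrightarrow \Af^*$ is a continuous open embedding, so closures in $\hat{\O}_K^*$ land inside closures in $\Af^*$. Conversely, given $u \in \hat{\O}_K^* \cap \overline{K_+^*}$, pick $x_n \in K_+^*$ with $x_n \to u$ in $\Af^*$; since $\hat{\O}_K^*$ is an open subgroup of $\Af^*$, one has $x_n \in \hat{\O}_K^*$ for $n$ large, hence $x_n \in K^* \cap \hat{\O}_K^* = \O_K^*$. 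Combined with total positivity this forces $x_n \in \O_{K,+}^*$, and $x_n \to u$ in the subspace topology of $\hat{\O}_K^*$, so $u \in U_K^+$.

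For exactness at $\Af^*/\overline{K_+^*}$, the image of $\hat{\O}_K^*$ is clearly contained in $\ker \bar c$ since units have trivial content. Conversely, if $a \in \Af^*$ satisfies $(a) \in P_K^1$, write $(a) = (y)$ with $y \in K_+^*$; then $ay^{-1} \in \hat{\O}_K^*$, and the class of $a$ modulo $\overline{K_+^*}$ agrees with that of $ay^{-1}$, which lies in the image. Surjectivity of $\bar c$ onto $C_K^1$ reduces to surjectivity of $c$ onto $J_K$, immediate from choosing a finite id\`ele with appropriate uniformizer components for any given integral ideal.

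I anticipate the main obstacle to be the argument at $\hat{\O}_K^*$, which hinges on the observation that $\hat{\O}_K^*$ is open in $\Af^*$: it is this openness, rather than mere continuity, that lets us conclude that totally positive approximants $x_n \in K_+^*$ eventually land in $\O_{K,+}^*$, so that $u$ is genuinely an element of the closure $U_K^+$ taken inside $\hat{\O}_K^*$.
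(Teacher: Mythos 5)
Your proof is correct. Note that the paper does not actually prove this lemma: it simply refers to \cite[Proposition 1.1]{LNT}, so your self-contained verification is by necessity a different route, and it is a sound one. The exactness checks at $\Af^*/\overline{K_+^*}$ and at $C_K^1$ are the standard content-map arguments and are fine; the genuinely delicate point is exactness at $\hat{\O}_K^*$, i.e.\ the identity $\hat{\O}_K^*\cap\overline{K_+^*}=U_K^+$, and you correctly isolate the fact that makes it work, namely that $\hat{\O}_K^*$ is an \emph{open} (not merely closed) subgroup of $\Af^*$, so that approximating elements $x_n\in K_+^*$ eventually lie in $K^*\cap\hat{\O}_K^*=\O_K^*$ and hence in the totally positive units $\O_{K,+}^*$. (Sequences suffice since $\Af^*$ is second countable, hence metrizable.) One small remark: you implicitly read $U_K^+$ as the closure of the totally positive \emph{units} $\O_{K,+}^*$ in $\hat{\O}_K^*$, whereas the paper's notation paragraph literally says ``the closure of $\O_{K,+}$,'' the totally positive integers. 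Your reading is the correct one --- with the literal reading the lemma would fail already for $K=\Q$, where $\O_{\Q,+}=\Z_{>0}$ is dense in $\hat{\Z}$ while the sequence forces $U_{\Q}^+$ to be trivial --- and it agrees with the statement in \cite{LNT}, so this is a point in your favor rather than a gap.
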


Note that the homomorphism $\Af^*/\overline{K_+^*} \rightarrow C_K^1$ is defined by sending the class of $a \in \Af^*$ to the class of $(a)$. 
The exact sequence in Lemma~\ref{fund} plays a fundamental role in determination of the primitive ideal space. 

Combining above lemmas and Williams' theorem, we get the first main theorem. 
\begin{thm} \label{classnum}
Let $(A_K,\sigma_t)$ be the Bost-Connes system for a number field $K$ and let $h_K^1$ be the narrow class number of $K$. 
Then $A_K$ has $h_K^1$-dimensional irreducible representations, and does not have $n$-dimensional irreducible representations for $n \neq h_K^1$ and
$n < \infty$. 
\end{thm}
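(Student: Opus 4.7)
The plan is to transport the problem to $\tilde{A}_K = C_0(X_K) \rtimes J_K$ via the Morita equivalence with full corner $e := 1_{Y_K}$. By Lemma~\ref{crit} together with the Rieffel correspondence of primitive ideals, every finite-dimensional irreducible representation $\sigma$ of $A_K$ is unitarily equivalent to the compression $(\pi|_{A_K},\pi(e)\H_{\pi})$ of some irreducible representation $\pi$ of $\tilde{A}_K$ with $\pi(e)\neq 0$, and $\dim \sigma = \dim \pi(e)\H_{\pi}$. Williams' Theorem~\ref{Williams} parametrizes primitive ideals of $\tilde{A}_K$ by orbits $[x,\gamma]\in X_K\times \hat{J}_K/\!\sim$ via the representations $\pi_{x,\gamma}$, so the task reduces to computing $\dim \pi_{x,\gamma}(e)\H_{x,\gamma}$. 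The canonical basis from Remark~\ref{canonical} is indexed by cosets $[s]\in J_K/(J_K)_x$ satisfying $s\cdot x\in Y_K$, so this dimension admits a combinatorial description.

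For $x=[0,\alpha]\in X_K^0$, every $J_K$-translate of $x$ remains in $X_K^0\subseteq Y_K$, hence $\pi_{x,\gamma}(e)=\mathrm{id}_{\H_{x,\gamma}}$ and the dimension of the compression equals $[J_K:(J_K)_x]$ by Lemma~\ref{dim}. The isotropy group can be computed directly: an ideal $\a=(a)$ fixes $[0,\alpha]$ iff $[a]_K=[u]_K$ in $\GKab$ for some $u\in\hat{\O}_K^*$, and by the exact sequence of Lemma~\ref{fund} this is equivalent to the ideal class of $\a$ in $C_K^1$ being trivial. Hence $(J_K)_x=P_K^1$, so $\dim \sigma=[J_K:P_K^1]=h_K^1$. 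In particular, $h_K^1$-dimensional irreducible representations of $A_K$ do exist.

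For $x=[\rho,\alpha]$ with $\rho\neq 0$, pick a prime $\p$ with $\rho_\p\neq 0$. If $\a=(a)$ lies in $(J_K)_x$, then the first-coordinate equation $\rho_\p a_\p=\rho_\p u_\p$ with $u_\p\in\O_\p^*$ and $\rho_\p\neq 0$ forces $v_\p(a)=0$, so the cosets $[\p^n]$ are pairwise distinct in $J_K/(J_K)_x$. Moreover $\p^n\cdot x\in Y_K$ as soon as $n\geq -v_\p(\rho)$, since this is exactly the condition $\rho a\in\hat{\O}_K$. Hence $\pi_{x,\gamma}(e)\H_{x,\gamma}$ contains infinitely many orthogonal basis vectors whenever it is nonzero, ruling out any finite dimension other than $h_K^1$. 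The main obstacle is precisely this last step: knowing $\dim \H_{x,\gamma}=\infty$ alone does not preclude the compression by $\pi_{x,\gamma}(e)$ from collapsing to a finite-dimensional subspace, so the explicit exhibition of infinitely many cosets $[\p^n]$ with $\p^n\cdot x\in Y_K$ is the essential technical point.
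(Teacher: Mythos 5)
Your overall route is the same as the paper's: reduce to $\tilde{A}_K$ via the full corner $1_{Y_K}$ and Lemma~\ref{crit}, use Theorem~\ref{Williams} to reduce to the representations $\pi_{x,\gamma}$, identify the isotropy group with $P_K^1$ via Lemma~\ref{fund} when $\rho=0$, and exhibit infinitely many orthogonal vectors in $\pi_{x,\gamma}(1_{Y_K})\H_{x,\gamma}$ when $\rho\neq 0$. One minor omission: Williams' theorem parametrizes \emph{primitive ideals}, not irreducible representations ($\tilde{A}_K$ is not type I), so to pass from an arbitrary finite-dimensional irreducible representation of $A_K$ to one of the $\pi^0_{x,\gamma}$ you need the observation that two irreducible representations with the same kernel, one of them finite-dimensional, have equal dimension because the common quotient is then a full matrix algebra; the paper states this explicitly and your write-up should too.

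There is, however, a genuine error in the step you yourself single out as the essential technical point. You claim that $\p^n\cdot x\in Y_K$ as soon as $n\geq -v_{\p}(\rho)$, ``since this is exactly the condition $\rho a\in\hat{\O}_K$.'' It is not: an id\'ele $a$ generating $\p^n$ is a unit at every place $\q\neq\p$, so $\rho a\in\hat{\O}_K$ also requires $\rho_{\q}\in\O_{K_{\q}}$ at every other finite place, and a general nonzero $\rho\in\Af$ may be non-integral at finitely many places $\q\neq\p$; for such $x$ no power of $\p$ alone moves $x$ into $Y_K$, and your inequality controls only the $\p$-component. The paper's fix is to first choose an integral ideal $\a\in I_K$ with $\a x\in Y_K$ (possible since $\rho_{\q}\in\O_{K_{\q}}$ for all but finitely many $\q$) and then use the vectors $\xi_{\p^n\a}$ for $n\geq 0$, which lie in $\pi_{x,\gamma}(1_{Y_K})\H_{x,\gamma}$ and are pairwise orthogonal because the classes of $\p^n\a$ in $J_K/J_{K,x}$ remain distinct. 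Alternatively, one may replace $x$ by a point of its $J_K$-orbit lying in $Y_K$ (the orbit always meets $Y_K$, and this does not change the unitary equivalence class of $\pi_{x,\gamma}$), after which your bound $n\geq 0$ suffices. Either repair is short, but as written the step fails.
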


\begin{lem} \label{sublem}
The statement of {\rm Theorem~\ref{classnum}} holds for $\tilde{A}_K$. 
\end{lem}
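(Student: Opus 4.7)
The plan is to apply Williams' theorem (Theorem \ref{Williams}) together with Lemma \ref{dim} to the crossed product $\tilde{A}_K = C_0(X_K) \rtimes J_K$, establishing two claims: that $h_K^1$-dimensional irreducible representations exist, and that no finite-dimensional irreducible representation has any other dimension. Both rest on computing the stabilizer $(J_K)_x$ for $x = [\rho, \alpha] \in X_K$: an ideal $(a) \in J_K$ fixes $[\rho, \alpha]$ precisely when there exists $s \in \hat{\O}_K^*$ with $\rho a = \rho s$ in $\Af$ and $[as]_K = 1$ in $\GKab$. Examining the first equation prime by prime, at any $\p$ with $\rho_\p \neq 0$ we must have $a_\p = s_\p \in \hat{\O}_{K,\p}^*$; hence when $\rho \neq 0$, picking a prime $\p_0$ with $\rho_{\p_0} \neq 0$ shows that $(J_K)_x$ is contained in the subgroup of ideals trivial at $\p_0$, which has infinite index. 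When $\rho = 0$ the first condition disappears, and Lemma \ref{fund} together with the induced isomorphism $\Af^*/\overline{K_+^*}\hat{\O}_K^* \cong C_K^1$ identifies $(J_K)_x$ with $P_K^1$, of index exactly $h_K^1$; the same identification exhibits $X_K^0$ as the finite discrete set $C_K^1$ on which $J_K$ acts transitively.

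For existence, I would fix $x \in X_K^0$ and any $\gamma \in \hat{J_K}$ and check that the representation $\pi_{x,\gamma}$ constructed in Section \ref{primitive} is irreducible. By Lemma \ref{dim} its dimension is $[J_K : P_K^1] = h_K^1$, and since the orbit $J_K \cdot x = X_K^0$ is a finite discrete subset of $X_K$, the characteristic functions of the $h_K^1$ distinct orbit points belong to $C_0(X_K)$ and act as rank-one diagonal projections onto the basis vectors $\xi_s$ indexed by $J_K/P_K^1$. Since the unitaries $\pi_{x,\gamma}(u_t)$ permute these basis vectors transitively, composing them with the diagonal projections produces all rank-one matrix units, whence $\pi_{x,\gamma}(\tilde{A}_K) = M_{h_K^1}(\C)$ and $\pi_{x,\gamma}$ is irreducible.

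For the uniqueness of the dimension, let $\pi$ be any finite-dimensional irreducible representation of $\tilde{A}_K$. Since $C_0(X_K)$ is commutative, $\pi|_{C_0(X_K)}$ decomposes into point evaluations at a finite set $\{x_1, \dots, x_k\} \subset X_K$ with eigenspaces $\H_{x_i}$ of dimensions $m_i$. The covariance relation shows that $\pi(u_s)$ maps $\H_{x_i}$ unitarily onto $\H_{s \cdot x_i}$, so each $J_K$-orbit of the spectrum provides an invariant subspace, and irreducibility forces $\{x_i\}$ to be a single finite $J_K$-orbit with a common multiplicity $m$. By the stabilizer computation this orbit must lie in $X_K^0$ with $k = h_K^1$. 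Finally, any nonzero proper $P_K^1$-invariant subspace of $\H_{x_1}$ would, after translating by coset representatives of $J_K/P_K^1$, yield a proper nonzero sub-representation of $\pi$; hence $\H_{x_1}$ is irreducible as a $P_K^1$-representation, and abelianness of $P_K^1$ forces $m = 1$, giving $\dim \pi = h_K^1$. The main obstacle I anticipate is the prime-by-prime stabilizer analysis—specifically verifying that every nonzero $\rho$ destroys finite-indexness and that $\rho = 0$ yields exactly $P_K^1$—since this is where the arithmetic of $X_K$ and the exact sequence in Lemma \ref{fund} interact most delicately.
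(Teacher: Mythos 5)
Your proposal is correct, and its arithmetic core --- computing $J_{K,x}$ prime by prime, showing the index is infinite once some $\rho_{\p}\neq 0$ via the distinct classes of the powers of $\p$, and identifying $X_K^0\cong C_K^1$ with stabilizer $P_K^1$ via Lemma~\ref{fund} --- is exactly what the paper does. Where you diverge is in how the stabilizer computation is converted into a statement about \emph{all} finite-dimensional irreducible representations. The paper routes everything through Williams' theorem: every primitive ideal of $\tilde{A}_K$ is $\ker\pi_{x,\gamma}$ for some $(x,\gamma)$, and two irreducible representations with the same kernel have the same dimension when one is finite-dimensional (both quotients being the same matrix algebra), so Lemma~\ref{dim} alone settles which finite dimensions can occur. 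You instead argue directly: an arbitrary finite-dimensional irreducible $\pi$ restricts to a nondegenerate representation of $C_0(X_K)$, hence a finite sum of point evaluations; covariance forces the spectrum to be a single finite orbit, which by the stabilizer computation must be $X_K^0$; and irreducibility of the fiber over one point under the abelian stabilizer $P_K^1$ forces multiplicity one. This is more elementary (Theorem~\ref{Williams} is not used at all for this lemma), and in exchange you must verify irreducibility of $\pi_{x,\gamma}$ for $x\in X_K^0$ by hand, which you do; the paper gets both halves for free from the general machinery but leaves that machinery doing more of the work. Two small inaccuracies, neither fatal: the characteristic functions of the points of $X_K^0$ do not themselves lie in $C_0(X_K)$ (the orbit is closed, not open); what is true, and suffices, is that evaluation at the $h_K^1$ orbit points maps $C_0(X_K)$ onto $\C^{h_K^1}$, so the diagonal rank-one projections still lie in the image $\pi_{x,\gamma}(\tilde{A}_K)$. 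Likewise the stabilizer condition should read $[as^{-1}]_K=1$ rather than $[as]_K=1$, which changes nothing downstream.
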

\begin{proof}
Let $x= [\rho,\alpha] \in X_K = \Af \times_{\hat{\O}_K^*} \GKab$ and let $\gamma \in \hat{J}_K$. 
By Lemma~\ref{dim}, the dimension of $\pi_{x,\gamma}$ equals $[J_K: J_{K,x}]$. 
In general, if $\ker \pi = \ker \rho$ holds for irreducible representations $\pi,\rho$ of a $C^*$-algebra $A$, 
then we have $\dim \pi = \dim \rho$ because if either $\rho$ or $\pi$ is finite dimensional, 
then $A/\ker \pi \cong M_{\dim \pi}(\C)$ is isomorphic to $A/\ker \rho \cong M_{\dim \rho}(\C)$. 
Hence it suffices to show the following: 
\begin{enumerate}
\item If $\rho \neq 0$, then $[J_K: J_{K,x}] = \infty$. 
\item If $\rho = 0$, then $[J_K: J_{K,x}] = h_K^1$.
\end{enumerate}

Suppose $\rho \neq 0$ and let $\p$ be a prime of $K$ such that $\rho_{\p} \neq 0$. If $\a = (a) \in J_{K,x}$, 
then $a_{\p} \in \O_{K_{\p}}^*$ because $\rho a s = \rho$ for some $s \in \hat{O}_K^*$ implies $a_{\p}s_{\p} = 1$. 
Hence the classes of $\p^n$'s for $n \in \Z$ in $J_K/J_{K,x}$ are distinct elements. Therefore the index of $J_{K,x}$ is infinite. 

Suppose $\rho = 0$. In this case, we consider the action of $J_K$ on $X_K^0 = \GKab/[\hat{\O}_K^*]$ ($X_K^0$ is defined in Section \ref{def}). 
We have $X_K^0 = C_K^1$ by Lemma~\ref{fund}. 
The action of $J_K$ on $X_K^0 = J_K/P_K^1$ coincides with the multiplication.   
Hence the isotropy group $J_{K,x}$ coincides with $P_K^1$ and its index equals $|C_K^1|=h_K^1$. 
\end{proof}

\begin{proof}[Proof of Theorem~\ref{classnum}]
For $x=[\rho,\alpha] \in X_K$ and $\alpha \in \hat{J}_K$, let 
$(\pi_{x,\gamma}^0, \H_{x,\gamma}^0) = (\pi_{x,\gamma}|_{A_K}, \pi_{x,\gamma}(1_{Y_K})\H_{x,\gamma})$. 
We need to show that $\dim \pi_{x,\gamma} = \dim \pi_{x,\gamma}^0$. 
If $\rho = 0$, then we have $\pi_{x,\gamma}(1_{Y_K}) = 1$ by definition of $\pi_{x,\gamma}$. 
Hence $\dim \pi_{x,\gamma} = \dim \pi_{x,\gamma}^0$ holds by Lemma~\ref{crit}. 
So it suffices to show that $\pi_{x,\gamma}^0$ is infinite dimensional if $\rho \neq 0$. 

Take an integral ideal $\a \in I_K$ such that $\a x \in Y_K$ 
(we can always take such $\a$ because $\rho_{\p} \in \O_{K_{\p}}$ for all but finitely many $\p$). 
Let $\p$ be a prime of $K$ such that $\rho_{\p} \neq 0$. Then we have seen in the proof of Lemma~\ref{sublem} that 
the classes of $\p^n$'s are distinct in $J_K/J_{K,x}$. Hence so are for $\p^n\a$'s. 
This means that $\{\xi_{\p^n\a}\}_{n \in \Z}$ is an orthogonal family in $\H_{x,\gamma}$. 
Since $\p^n\a x \in Y_K$ for $n \geq 0$, $\xi_{\p^n\a} \in \pi_{a,\gamma}(1_{Y_K})\H_{x,\gamma}$ for $n \geq 0$. 
Therefore $\pi_{a,\gamma}(1_{Y_K})\H_{x,\gamma}$ is infinite dimensional. 
\end{proof}

\begin{cor}
Let $K,L$ be number fields and let $(A_K,\sigma_{t,K}), (A_L,\sigma_{t,L})$ be their Bost-Connes systems. 
If $A_K \cong A_L$ as $C^*$-algebras, then $h_K^1 = h_L^1$. 
\end{cor}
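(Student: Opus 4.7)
The proof plan is essentially a direct application of Theorem~\ref{classnum}, since that theorem already encodes $h_K^1$ as a purely $C^*$-algebraic invariant of $A_K$, namely as the unique finite dimension realized among irreducible representations. First I would observe that any $C^*$-algebra isomorphism $\varphi\colon A_K \to A_L$ induces, by precomposition $\pi \mapsto \pi \circ \varphi$, a bijection between the unitary equivalence classes of irreducible representations of $A_L$ and those of $A_K$, and this bijection obviously preserves the Hilbert space dimension of each representation. Consequently the set
\begin{equation*}
D(A) := \{\dim \pi : \pi \in \widehat{A},\ \dim \pi < \infty\}
\end{equation*}
is an isomorphism invariant of a $C^*$-algebra $A$.

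Next I would combine this with Theorem~\ref{classnum} in both directions. Applied to $A_L$, the theorem gives $D(A_L) = \{h_L^1\}$; pulling back a finite-dimensional irreducible representation of $A_L$ through $\varphi$ therefore produces one of $A_K$ of dimension $h_L^1$, so $h_L^1 \in D(A_K)$. Applied to $A_K$, the theorem gives $D(A_K) = \{h_K^1\}$, and combining these equalities forces $h_L^1 = h_K^1$.

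There is no real obstacle: the genuine work is entirely contained in Theorem~\ref{classnum}, and the corollary is merely the observation that ``$h_K^1$ is encoded in $D(A_K)$.'' The only small subtlety worth noting is that one uses both halves of Theorem~\ref{classnum}—existence of an $h_K^1$-dimensional irreducible representation and nonexistence of finite-dimensional irreducible representations of any other dimension—to make the argument symmetric in $K$ and $L$.
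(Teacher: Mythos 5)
Your proposal is correct and is exactly the intended argument: the paper states this corollary without proof precisely because it follows immediately from Theorem~\ref{classnum}, since the set of finite dimensions of irreducible representations is manifestly preserved under $C^*$-algebra isomorphism. Your remark that both halves of the theorem (existence in dimension $h_K^1$ and nonexistence in all other finite dimensions) are needed for the symmetric conclusion is the right observation.
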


\begin{exm} \label{example}
From the classification theorem of the KMS-states by Laca-Larsen-Neshveyev \cite{LLN}, 
we know that the Dedekind zeta function is an invariant of Bost-Connes systems. 
From Theorem~\ref{classnum}, we know that the narrow class number is also an invariant. 
We can see that this is actually a new invariant. Indeed, 
there exist two fields which have the same Dedekind zeta function but different narrow class numbers. 
For example, let $K=\Q(\sqrt[8]{a}), L=\Q(\sqrt[8]{16a})$ for $a=-15$. 
Then $K$ and $L$ are totally imaginary fields, so their narrow class numbers $h_K^1,h_L^1$ are equal to their class numbers $h_K,h_L$. 
By the result of de Smit and Perlis \cite{SP}, we have $\zeta_K=\zeta_L$ and $h_K^1/h_L^1=h_K/h_L=2$. 
\end{exm}

From the proof of Theorem~\ref{classnum} and the fact $\hat{J}_K/P_K^{1,\perp} = \hat{P}_K^1$, 
we can see that there is an embedding of $\hat{P}_K^1$ into $\mathrm{Prim} A_K$. 
This is a distinguished subspace of $\mathrm{Prim} A_K$ that is homeomorphic to $\T^{\infty}$. 
By Proposition~\ref{dyn}, $\R$ acts on $\hat{P}_K^1$ as in Section \ref{predyn}. 
Hence we can get another invariant by restricting our attention to dynamics on $\hat{P}_K^1$. 

\begin{prop} \label{homeo}
Let $K,L$ be two number fields. If their Bost-Connes systems $(A_K,\sigma_{t,K})$ and $(A_L,\sigma_{t,L})$ are 
$\R$-equivariantly isomorphic, then $\hat{P}_K^1$ and $\hat{P}_L^1$ are $\R$-equivariantly homeomorphic. 
\end{prop}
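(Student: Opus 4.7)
The strategy is to identify $\hat{P}_K^1$ with an intrinsic $\R$-invariant subspace of $\mathrm{Prim}\,A_K$; once this identification is established, any $\R$-equivariant isomorphism $A_K \cong A_L$ will induce an $\R$-equivariant homeomorphism of these subspaces, which by construction is the sought-after map $\hat{P}_K^1 \cong \hat{P}_L^1$. The natural intrinsic subspace to use is the set of kernels of finite-dimensional irreducible representations of $A_K$. By Theorem~\ref{classnum} every such representation has dimension $h_K^1$, and this set is clearly preserved by any $C^*$-algebra isomorphism as well as by the $\R$-action on $\mathrm{Prim}\,A_K$, since $\sigma_t$ sends $\ker\pi$ to $\ker(\pi\circ\sigma_{-t})$ without changing the dimension of the quotient.

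To carry out the identification, I would pass through the Morita equivalence $A_K\sim\tilde{A}_K$, whose Rieffel homeomorphism of primitive ideal spaces is $\R$-equivariant by Proposition~\ref{Morita}. Applying Williams' Theorem to $\tilde{A}_K=C_0(X_K)\rtimes J_K$ together with the analysis in the proofs of Lemma~\ref{sublem} and Theorem~\ref{classnum}, the finite-dimensional primitive ideals of $\tilde{A}_K$ are precisely those $[x,\gamma]$ with $x\in X_K^0$. On $X_K^0$ the group $J_K$ acts transitively with isotropy $P_K^1$, so the Williams equivalence relation collapses $X_K^0\times\hat{J}_K$ to $\hat{J}_K/P_K^{1,\perp}\cong\hat{P}_K^1$. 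Openness of the quotient map (Remark~\ref{open}) ensures that this is not merely a set-theoretic bijection but a homeomorphism onto its image.

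Next, Proposition~\ref{dyn} gives the induced $\R$-action as $\sigma_t([x,\gamma])=[x,N(\cdot)^{it}\gamma]$, which under the identification with $\hat{P}_K^1$ becomes the character-twisting action $\gamma\mapsto N(\cdot)^{it}\gamma$ with $N$ restricted to $P_K^1$. This is exactly the $\R$-action on $\hat{P}_K^1$ defined in Section~\ref{predyn}. Composing the Rieffel homeomorphism with the Williams identification therefore yields an $\R$-equivariant embedding of $(\hat{P}_K^1,\sigma)$ into $\mathrm{Prim}\,A_K$ whose image is the set of finite-dimensional primitive ideals; the analogous statement for $L$ then gives the conclusion by transport.

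The main obstacle I anticipate is purely topological: verifying that the subspace of $\mathrm{Prim}\,\tilde{A}_K$ arising from $X_K^0\times\hat{J}_K$ really carries the product/quotient topology of $\hat{P}_K^1$ rather than just agreeing with it as a set. This reduces to checking that the restriction of the Williams quotient map to $X_K^0\times\hat{J}_K$ remains open, which is immediate from $X_K^0$ being $J_K$-invariant together with Remark~\ref{open}. Everything else in the plan — preservation of finite dimension under $\R$-equivariant isomorphism, the isotropy computation, and the action formula — is either already established in the excerpt or follows by a direct check.
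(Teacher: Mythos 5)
Your proposal is correct and follows essentially the same route as the paper: both identify $\hat{P}_K^1$ with the locus of finite-dimensional primitive ideals of $A_K$ (via the Rieffel homeomorphism, Williams' theorem applied to $X_K^0$, and Theorem~\ref{classnum}), observe that this locus is intrinsic and hence preserved by any isomorphism, and match the induced $\R$-action with the one from Section~\ref{predyn} using Proposition~\ref{dyn}. You simply spell out the embedding and the topological check in more detail than the paper, which relegates that material to the paragraph preceding the proposition.
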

\begin{proof}
Let $\Phi: \mathrm{Prim} A_K \rightarrow \mathrm{Prim} A_L$ be the $\R$-equivariant homeomorphism induced from an isomorphism between the Bost-Connes systems. 
It suffices to show that $\Phi(\hat{P}_K^1) = \hat{P}_L^1$. 
By Theorem~\ref{classnum}, $\hat{P}_K^1$ coincides with the set of all primitive ideals which have finite quotients. 
Since $\Phi$ is induced from an isomorphism, it obviously carries $\hat{P}_K^1$ to $\hat{P}_L^1$. 
\end{proof}

We study the dynamics $\hat{P}_K^1$ in Section \ref{last}. 

\subsection{Examples of Irreducible Representations}
In this section, we give an explicit description of some irreducible representations. 
As in Section \ref{primitive}, for $x \in X_K$ and $\gamma \in \hat{J}_K$ we have an irreducible representation of $\tilde{A}_K$ defined by 
\begin{equation*}
(\pi_{x,\gamma},\H_{x,\gamma}) = \mathrm{Ind}^{J_K}_{J_{K_x}} (\mathrm{ev}_x \rtimes \gamma |_{J_{K,x}}). 
\end{equation*}
By Lemma~\ref{crit}, the representation of $A_K$ corresponding to $(\pi_{x,\gamma},\H_{x,\gamma})$ is 
\begin{equation*}
(\pi_{x,\gamma}^0,\H_{x,\gamma}^0) = (\pi_{x,\gamma}|_{A_K}, \pi_{x,\gamma}(1_{Y_K})\H_{x,\gamma}). 
\end{equation*}

First, we can determine an explicit form for the finite dimensional representations. 
Since $X^0=C_K^1$ is a closed invariant set of $J_K$, we have a canonical quotient map $q_K:C(Y_K) \rtimes I_K \rightarrow C(C_K^1) \rtimes J_K$. 
Take a character $\gamma \in \hat{J}_K$. 
Then we have the $*$-homomorphism $\varphi_{\gamma}:C(C_K^1) \rtimes J_K \rightarrow C(C_K^1) \rtimes C_K^1$ defined by 
\begin{eqnarray*}
\varphi_{\gamma}(f) = f \mbox{ for } f \in C(C_K^1), \mbox{ and } \varphi_{\gamma}(u_{s}) = \langle s, \gamma \rangle u_{\bar{s}}, 
\end{eqnarray*}
where $\bar{s}$ denotes the class of $s$ in $C_K^1$. 
Since $C(C_K^1) \rtimes C_K^1 \cong M_n (\C)$ for $n = |C_K^1| = h_K^1$, 
we obtain the surjection $\varphi_{\gamma} \circ q_K : A_K \rightarrow M_n(\C)$.
As usual, the $C^*$-algebra $C(C_K^1) \rtimes C_K^1$ acts on $\ell^2(C_K^1)$ by 
\begin{eqnarray*}
(f\xi)(s)= f(s)\xi(s) \mbox{ for } f \in C(C_K^1), \mbox{ and } (u_t\xi)(s) = \xi(t^{-1}s). 
\end{eqnarray*}
So $\rho_{\gamma} = \varphi_{\gamma} \circ q_K$ defines an irreducible representation. 
If two elements $\gamma, \delta \in \hat{J}_K$ satisfy $\gamma \delta^{-1} \in \hat{P}_K^{1,\perp}$, 
then $\rho_{\gamma}$ is unitarily equivalent to $\rho_{\delta}$. 
Indeed, for any element $\omega \in \P_K^{1,\perp} \cong \hat{C}_K^1$, 
we have the isomorphism of $C(C_K^1) \rtimes C_K^1 \cong M_n(\C)$ defined by 
\begin{eqnarray*}
f \mapsto f \mbox{ for } f \in C(C_K^1), \mbox{ and } u_{\bar{s}} \mapsto \langle \bar{s}, \gamma \rangle u_{\bar{s}}, 
\end{eqnarray*}
which is automatically implemented by a unitary. 
From now on, we assume that $\rho_{\gamma}$ is associated to the element $\gamma \in \hat{J}_K/P_K^{1,\perp} \cong \hat{P}_K^1$. 

Using Remark~\ref{canonical}, we can show that $\rho_{\gamma}$ is unitarily equivalent to $\pi_{[0,1],\gamma}^0$ 
($[0,1]$ is an element of $X_K^0$, not a closed interval). 
This implies that $\{\rho_{\gamma}\}_{\gamma \in \hat{P}_K^1}$ are not mutually unitarily equivalent, 
and any finite dimensional irreducible representation is unitarily equivalent to some $\rho_{\gamma}$. 

Benefiting from writing down representations associated to $\hat{P}_K^1$ in this form, we can prove the following proposition: 
\begin{prop}
We have $\displaystyle \ker q_K = \bigcap_{\gamma \in \hat{P}_K^1} \ker \rho_{\gamma}$. 
\end{prop}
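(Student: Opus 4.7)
The plan is to reduce to showing that the family of quotients $\{\varphi_\gamma\}_{\gamma \in \hat{P}_K^1}$ is jointly faithful on $B := C(C_K^1) \rtimes J_K$. The inclusion $\ker q_K \subseteq \bigcap_\gamma \ker \rho_\gamma$ is immediate from $\rho_\gamma = \varphi_\gamma \circ q_K$; and since $q_K$ is surjective, once $\bigcap_\gamma \ker \varphi_\gamma = 0$ is established in $B$, the reverse inclusion follows because $\bigcap_\gamma \ker \rho_\gamma = q_K^{-1}(0) = \ker q_K$.

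To see $\bigcap_\gamma \ker \varphi_\gamma = 0$, I would apply Theorem~\ref{Williams} to $B$ directly. The group $J_K$ acts on $C_K^1$ through the quotient $J_K \twoheadrightarrow C_K^1$ by translation, so the action is transitive and the isotropy at every point equals $P_K^1$. The equivalence $\sim$ therefore collapses $C_K^1 \times \hat{J}_K$ onto $\hat{J}_K/P_K^{1,\perp} \cong \hat{P}_K^1$, and Williams' theorem identifies $\mathrm{Prim}(B)$ with $\hat{P}_K^1$. Each primitive ideal corresponds to an induced representation of dimension $[J_K:P_K^1] = h_K^1$ by Lemma~\ref{dim}, i.e.\ a surjection $B \to M_{h_K^1}(\C)$. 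It then suffices to match $\varphi_\gamma$ with this induced representation at the class of some fixed $c \in C_K^1$; this is carried out in the same way as the identification of $\rho_\gamma$ with $\pi_{[0,1],\gamma}^0$ just before the proposition, using the orthonormal basis provided by Remark~\ref{canonical}.

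Once that matching is in hand, $\{\ker \varphi_\gamma\}_{\gamma \in \hat{P}_K^1}$ exhausts $\mathrm{Prim}(B)$, and since the intersection of all primitive ideals in any $C^*$-algebra vanishes, we conclude $\bigcap_\gamma \ker \varphi_\gamma = 0$. The one nonroutine step is the basis matching, where the twist $\langle s, \gamma \rangle$ built into the definition of $\varphi_\gamma$ must be balanced against the $\gamma$-twisted inner product on $\H_{c,\gamma}$; this is essentially the same bookkeeping that the paper already carries out for $\rho_\gamma$, so I expect no genuine new difficulty. Alternatively, one can bypass Williams' theorem by noting $\varphi_\gamma = \varphi_1 \circ \hat{\sigma}_\gamma$ for the dual $\hat{J}_K$-action $\hat{\sigma}$ on $B$, so that $\bigcap_\gamma \ker \varphi_\gamma$ is a $\hat{\sigma}$-invariant ideal contained in $\ker \varphi_1$; by Takai duality the $\hat{\sigma}$-invariant ideals of $B$ correspond to the $J_K$-invariant ideals of $C(C_K^1)$, which are trivial by transitivity, forcing the intersection to be zero.
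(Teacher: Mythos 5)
Your argument is correct, but it takes a genuinely different route from the paper's. The paper never computes $\mathrm{Prim}(C(C_K^1)\rtimes J_K)$: it bundles all the quotients into a single map $\Phi=\prod_{\gamma\in\hat{J}_K}\varphi_\gamma$ landing in $C(\hat{J}_K)\otimes(C(C_K^1)\rtimes C_K^1)$, notes $\Phi(fu_s)=\chi_s\otimes fu_{\bar s}$, and gets injectivity from the fact that $\Phi$ intertwines the canonical faithful conditional expectation onto $C(C_K^1)$ with the faithful expectation $\mu\otimes\mathrm{id}$, $\mu$ being the Haar measure of $\hat{J}_K$ --- an elementary, self-contained argument. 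Your main route instead applies Theorem~\ref{Williams} to the transitive action of $J_K$ on $C_K^1$ with constant isotropy $P_K^1$, matches each $\ker\varphi_\gamma$ with the induced primitive ideal $\ker\mathrm{Ind}^{J_K}_{P_K^1}(\mathrm{ev}_x\rtimes\gamma|_{P_K^1})$, and invokes the vanishing of the intersection of all primitive ideals. This works; the one step you rightly flag, the unitary equivalence of $\varphi_\gamma$ with the induced representation in the basis of Remark~\ref{canonical}, is exactly the computation the paper already relies on to identify $\rho_\gamma$ with $\pi^0_{[0,1],\gamma}$, so your proof leans on that identification while the paper's own proof of this proposition does not. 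In exchange you get a sharper conclusion: the ideals $\ker\varphi_\gamma$ exhaust $\mathrm{Prim}(C(C_K^1)\rtimes J_K)$, which recovers the fiber $\hat{P}_K^1$ of $\mathrm{Prim}\,A_K$ directly. Your alternative route is less different than it looks: writing $\varphi_\gamma=\varphi_1\circ\hat\sigma_\gamma$ exhibits $\bigcap_\gamma\ker\varphi_\gamma$ as the largest $\hat\sigma$-invariant ideal inside $\ker\varphi_1$, and the standard proof that $\hat\sigma$-invariant ideals of the crossed product correspond to $J_K$-invariant ideals of $C(C_K^1)$ proceeds by averaging the dual action over $\hat{J}_K$, which is precisely the faithful conditional expectation $E_1$ at the heart of the paper's argument.
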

\begin{proof}
Let $A=C(C_K^1)\rtimes J_K$ and $B=C(C_K^1)\rtimes C_K^1$. 
It suffices to show the injectivity of the homomorphism $\prod \varphi_{\gamma}$. 
We distinguish $\varphi_{\gamma}$ and $\varphi_{\delta}$ for $\gamma \delta^{-1} \in P_K^{1,\perp}$ here. 
Then the range of the map 
\begin{equation*}
\prod_{\gamma \in \hat{J}_K} \varphi_{\gamma}: A \rightarrow \prod_{\gamma \in \hat{J}_K} B
\end{equation*}
is contained in $C(\hat{J}_K, B)\cong C(\hat{J}_K)\otimes B$. Let $\Phi:A\rightarrow C(\hat{J}_K)\otimes B$ be that map. 
Then we have $\Phi(fu_s)= \chi_s \otimes fu_{\bar{s}}$, where $\chi_s$ denotes the character on $\hat{J}_K$ corresponding to $s \in J_K$. 
Let $E_1:A\rightarrow C(C_K^1)$ be the canonical conditional expectation, and 
let $E_2=\mu \otimes \mathrm{id}_B:C(\hat{J}_K) \otimes B \rightarrow B$, where $\mu$ is the Haar measure of $\hat{J}_K$. 
Then $E_1$ and $E_2$ are both faithful conditional expectations, and the diagram
\begin{eqnarray*}
\xymatrix{
A \ar[r]^{\Phi} \ar[d]^{E_2} & C(\hat{J}_K) \otimes B \ar[d]^{E_2} \\
C(C_K^1) \ar[r] & B=C(C_K^1) \rtimes C_K^1
}
\end{eqnarray*}
commutes. This implies the injectivity of $\Phi$. 
\end{proof}

\begin{cor}
Let $K,L$ be number fields. Then any isomorphism from $A_K$ to $A_L$ carries 
$\ker q_K=C_0(Y_K^{\natural}) \rtimes I_K$ to $\ker q_L=C_0(Y_L^{\natural}) \rtimes I_L$. 
\end{cor}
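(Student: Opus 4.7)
The plan is to recognize $\ker q_K$ as a purely $C^*$-algebraic invariant, so the conclusion follows automatically for any $*$-isomorphism.

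First I would combine the preceding proposition with the paragraph immediately above its statement, where it was observed that every finite-dimensional irreducible representation of $A_K$ is unitarily equivalent to some $\rho_\gamma$ with $\gamma \in \hat{P}_K^1$, and that these are mutually inequivalent. Together with the identity $\ker q_K = \bigcap_{\gamma \in \hat{P}_K^1} \ker \rho_\gamma$, this yields the intrinsic description
\begin{equation*}
\ker q_K = \bigcap \{\, \ker \pi : \pi \text{ is a finite-dimensional irreducible representation of } A_K \,\}.
\end{equation*}
Since the collection of finite-dimensional irreducible representations and the formation of their kernels are defined purely in terms of the $C^*$-algebra structure, the right-hand side depends only on $A_K$ as a $C^*$-algebra.

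Next, given an isomorphism $\Phi : A_K \to A_L$, the assignment $\pi \mapsto \pi \circ \Phi$ is a dimension-preserving bijection between equivalence classes of irreducible representations of $A_L$ and of $A_K$, satisfying $\Phi(\ker(\pi \circ \Phi)) = \ker \pi$. Since $\Phi$ is a bijection of ideals it commutes with arbitrary intersections, and applying the intrinsic description above to both $A_K$ and $A_L$ gives $\Phi(\ker q_K) = \ker q_L$ immediately.

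The identifications $\ker q_K = C_0(Y_K^\natural) \rtimes I_K$ and $\ker q_L = C_0(Y_L^\natural) \rtimes I_L$ appearing in the statement are separate from the main argument: they follow from the fact that $q_K$ is induced by restriction of $C(Y_K)$ to the closed invariant subset $X_K^0 \subset Y_K$, whose complement in $Y_K$ is $Y_K^\natural$, so the kernel of that restriction is $C_0(Y_K^\natural)$, and this descends to the stated form inside the crossed product. There is essentially no obstacle in this proof; the substance is already packaged in Theorem~\ref{classnum} and the preceding proposition, and the only point to verify is that the ``intersection of kernels of finite-dimensional irreducible representations'' formulation really is manifestly preserved by $*$-isomorphisms.
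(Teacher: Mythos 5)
Your argument is exactly the one the paper intends: the corollary is stated as an immediate consequence of the preceding proposition together with the observation that every finite-dimensional irreducible representation of $A_K$ is equivalent to some $\rho_\gamma$, which gives the intrinsic characterization of $\ker q_K$ as the intersection of kernels of all finite-dimensional irreducible representations. Your proposal is correct and follows the same route, with the added (and harmless) care of spelling out why this characterization is preserved under any $*$-isomorphism.
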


Next, we visit another example. 
By the KMS-classification theorem in \cite{LLN}, extremal KMS$_{\beta}$-states for $\beta > 1$ are obtained from irreducible representations. 
Let us recall the definition of these representations. 
For $g \in \GKab$, we have an irreducible representation $\pi_g$ on $\ell^2(I_K)$ defined by 
\begin{eqnarray*}
&&\pi_g(f)\xi_s = f(s \cdot g)\xi_s \mbox{ for } f \in C(Y_K), \mbox{ and } \\
&&\pi_g(\mu_t)\xi_s = \xi_{ts} \mbox{ for } t \in I_K, 
\end{eqnarray*}
where $g$ is identified with $[1,g] \in Y_K^*$. 
We can check that $\pi_g$ is unitarily equivalent to $\pi_{g,1}^0$ because $\pi_{g,1}(1_{Y_K})$ coincides with the projection 
$\ell^2(J_K)\rightarrow \ell^2(I_K)$. 

We can see directly that these representations are not unitarily equivalent. 

\begin{prop}
The representations $\{\pi_g\}_{g}$ are not unitarily equivalent. 
\end{prop}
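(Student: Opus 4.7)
The plan is to extract $g$ from the abstract $C^*$-module structure of $\pi_g$ by identifying a distinguished one-dimensional subspace of $\ell^2(I_K)$ intrinsically, and then reading off the joint eigenvalue of $C(Y_K)$ on that line.

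First, I would write $\pi_g(\mu_\a)^*$ explicitly. From $\pi_g(\mu_\a) \xi_s = \xi_{\a s}$, the adjoint acts by $\pi_g(\mu_\a)^* \xi_s = \xi_{\a^{-1} s}$ when $\a \mid s$ and vanishes otherwise, independently of $g$. The core computation is then
\[
\bigcap_{\a \in I_K \setminus \{\O_K\}} \ker \pi_g(\mu_\a)^* \;=\; \C\, \xi_{\O_K},
\]
since a vector $\sum_s c_s \xi_s$ is annihilated by all these operators precisely when $c_s = 0$ for every ideal $s$ admitting a nontrivial integral divisor, i.e., for every $s \neq \O_K$.

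Next, given a unitary $U$ with $\pi_h = U \pi_g U^{*}$, intertwining $\pi_g(\mu_\a)^*$ with $\pi_h(\mu_\a)^*$ for every $\a$ forces $U$ to carry the common kernel above onto itself, so $U \xi_{\O_K} = c\, \xi_{\O_K}$ for some $c \in \T$. Then for any $f \in C(Y_K)$,
\[
c f(g)\, \xi_{\O_K} \;=\; U \pi_g(f) \xi_{\O_K} \;=\; \pi_h(f)\, U \xi_{\O_K} \;=\; c f(h)\, \xi_{\O_K},
\]
so $f(g) = f(h)$ for every $f \in C(Y_K)$. Since $Y_K$ is compact Hausdorff, $C(Y_K)$ separates points, and we conclude $g = h$.

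The substantive step is the intrinsic characterization of $\xi_{\O_K}$ as the ``vacuum vector'' cut out by the collective kernel condition; once this is in place the rest follows immediately from the defining formulae of $\pi_g$, and I do not foresee any genuine obstacle.
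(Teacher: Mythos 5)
Your proof is correct, and it takes a genuinely different (and somewhat more elementary) route than the paper's. Both arguments hinge on the same basic observation — that $\pi_g(\mu_{\a})$ acts by $\xi_s \mapsto \xi_{\a s}$ independently of $g$, so an intertwiner $U$ between $\pi_g$ and $\pi_h$ must interact trivially with the isometries — but the key lemma differs. The paper proves the much stronger statement that the commutant of $C^*(I_K)$ in $\mathbb{B}(\ell^2(I_K))$ is trivial, via the tensor factorization $\ell^2(I_K) \cong \bigotimes_{\p}\ell^2(\N_{\p})$ under which $C^*(I_K)$ becomes a tensor product of Toeplitz algebras, each containing the compacts; this forces $U$ to be a scalar, hence $\pi_g = \pi_h$ as maps, and then one evaluates on $C(Y_K)$. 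You instead isolate only the one piece of information actually needed: the joint kernel $\bigcap_{\a \neq \O_K}\ker \pi_g(\mu_{\a})^* = \C\,\xi_{\O_K}$ (it would even suffice to intersect over prime ideals), which pins down the vacuum line intrinsically and forces $U\xi_{\O_K} = c\,\xi_{\O_K}$; evaluating $C(Y_K)$ on that line and using that $[1,g]=[1,h]$ in $Y_K^* \cong \GKab$ implies $g=h$ finishes the argument. Your version avoids the Toeplitz-algebra machinery entirely, at the cost of not establishing the (independently useful) irreducibility-type statement about $C^*(I_K)$ that the paper gets for free.
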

\begin{proof}
We have the tensor product decomposition of the Hilbert space as follows: 
\begin{equation*}
\ell^2(I_K) \cong \bigotimes_{\p} \ell^2(\N_{\p}),\ 
\xi_{\prod_{\p \in F} \p^{k_{\p}}} \mapsto \bigotimes_{\p \in F} \xi_{k_{\p}} \otimes \bigotimes_{\p \not\in F} 1,
\end{equation*}
where $\N_{\p}$ is a copy of $\N$ and $F$ is a finite set of primes of $K$. 
In this decomposition, the $C^*$-subalgebra $C^*(I_K)$ of $\mathbb{B}(\ell^2(I_K))$ moves to $\bigotimes_{\p} T_{\p}$, 
where $T_{\p}$ is a copy of the Toeplitz algebra ($T_{\p}$ is generated by the unilateral shift on $\ell^2(\N_{\p})$). 
Since $T_{\p}$ contains $\mathbb{K}(\ell^2(\N_{\p}))$, its commutant is trivial. 
Hence the commutant of $C^*(I_K)$ is trivial. 

Suppose that $\pi_g$ and $\pi_h$ are unitarily equivalent. 
Then the implementing unitary $U$ commutes with $C^*(I_K)$. 
The above argument implies $U=1$, so we have $\pi_g = \pi_h$. Hence $g=h$. 
\end{proof}

We would like to see where these representations are located inside $\mathrm{Prim} A_K$. 
Note that if $x \in Y_K^*$ then $J_{K,x}$ is trivial. 
So we have to determine $\overline{J_Kx}$ for $x \in Y_K$.  

\begin{lem} \label{Kaction} {\rm (cf.~ \cite[Lemma 2.3]{LR})}
For $\rho \in \Af$, we have 
\begin{equation*}
\overline{K^*_+ \rho} = \{ \sigma \in \Af\ |\ \rho_{\p}=0 \mbox{ {\rm implies} } \sigma_{\p}=0 \}.
\end{equation*}
\end{lem}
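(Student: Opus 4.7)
The inclusion $\overline{K^*_+ \rho} \subseteq \{\sigma \in \Af : \rho_\p = 0 \Rightarrow \sigma_\p = 0\}$ is free of charge: every $k \in K^*_+$ is a unit at every finite place, so multiplication by $k$ preserves the support, and the support condition is visibly closed in $\Af$. The content lies in the reverse inclusion. To establish it I would fix a basic open neighborhood $V = \prod_{\p \in F} V_\p \times \prod_{\p \notin F} \O_{K_\p}$ of a given $\sigma$ with $\supp \sigma \subseteq \supp \rho$ and, after enlarging $F$ to contain every prime at which $\rho_\p \notin \O_{K_\p}$, reduce the problem to producing $k \in K^*_+$ that is close to $\tau_\p := \sigma_\p/\rho_\p$ at each $\p \in F \cap \supp \rho$ and satisfies $v_\p(k) \geq 0$ for every $\p \notin F$.

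I would build such a $k$ in two stages. First, weak approximation at the finite set consisting of $F \cap \supp \rho$ together with all archimedean places yields $k_1 \in K^*$ that is close to $\tau_\p$ at those finite primes and close to $1$ at each archimedean place, hence totally positive. This $k_1$ has poles only at the finite set of primes outside $F$ where $v_\p(k_1) < 0$; let $\b$ be the integral ideal whose $\p$-component is $\max(0, -v_\p(k_1))$ for $\p \notin F$ and is trivial on $F$. Since $\b$ is coprime to $\prod_{\p \in F} \p^N$ for any $N$, the Chinese Remainder Theorem in $\O_K$ furnishes $y \in \O_K$ with $y \in \b$ and $y \equiv 1 \pmod{\prod_{\p \in F} \p^N}$. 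Any two such choices of $y$ differ by an element of the nonzero ideal $\b \cdot \prod_{\p \in F} \p^N$, which is a full lattice in $K \otimes_\Q \R$; because the totally positive cone in $K \otimes_\Q \R$ is open and nonempty, that coset intersects it, and so $y$ may be taken totally positive. Then $k := y k_1 \in K^*_+$ is close to $\tau_\p$ at each $\p \in F \cap \supp \rho$ (because $y$ is close to $1$ there) and satisfies $v_\p(k) = v_\p(y) + v_\p(k_1) \geq 0$ for $\p \notin F$ by construction of $\b$, so $k\rho \in V$.

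The substantive obstacle is juggling the three conditions on $k$ simultaneously: approximation at the finite primes in $F$, integrality at every other finite prime, and total positivity at the archimedean places. The tempting shortcut of invoking a strong approximation theorem for $K^*_+$ inside $\Af^*$ is barred by class field theory, which identifies $\Af^*/\overline{K^*_+}$ with the nontrivial group $\GKab$, so $K^*_+$ is \emph{not} dense in $\Af^*$. My argument sidesteps this by working inside the additive space $\Af$ rather than $\Af^*$ and by decoupling the task: weak approximation handles approximation and positivity at once, while the auxiliary element $y \in \b$ absorbs the denominators without disturbing either. The one non-formal point is that this $y$ can really be chosen totally positive, which comes down to the elementary observation that any nonzero ideal of $\O_K$ is a full lattice in $K \otimes_\Q \R$ and therefore meets the open totally positive cone in every coset.
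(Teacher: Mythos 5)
Your proof is correct, and at bottom it runs on the same engine as the paper's: approximate $\sigma_\p/\rho_\p$ at finitely many primes by a totally positive element obtained from the Chinese Remainder Theorem, using the fact that a coset of a nonzero ideal is a full lattice in $K\otimes_\Q\R$ and therefore meets the open totally positive cone. The difference is in how the denominators are handled, and your version is the more robust one. The paper reduces to $\rho\in\hat{\O}_K$, forms the global element $\tau$ with $\tau_\p=\rho_\p^{-1}\sigma_\p$ on the support of $\rho$, and then picks a single $a\in\O_{K,+}$ with $a\tau\in\hat{\O}_K$ before running the congruence argument; that step tacitly assumes $\tau\in\Af$, which can fail when $\rho$ has unbounded valuations (for $K=\Q$ and $\rho_p=p$ for every prime $p$ one gets $\tau_p=1/p$ for all $p$, so no such $a$ exists, even though the lemma still holds there). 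You never form $\tau$ globally: you fix one basic neighborhood at a time, use weak approximation only at the finitely many primes of $F\cap\mathrm{supp}\,\rho$ plus the archimedean places, and then clear the finitely many denominators of the approximant $k_1$ itself with the auxiliary totally positive $y\in\b$. Working neighborhood by neighborhood is exactly what sidesteps the issue, so your route buys a proof that covers all $\rho\in\Af$ without the implicit integrality hypothesis. Two small wording points: ``every $k\in K^*_+$ is a unit at every finite place'' should say ``is invertible in $K_\p$'' (a totally positive element need not lie in $\O_{K_\p}^*$; what you actually use is only that multiplication by $k$ preserves supports, which is correct); and in the last step one should note that the required accuracy is $|k-\tau_\p|_\p<\varepsilon_\p/|\rho_\p|_\p$, so that $y\equiv 1 \bmod \p^N$ with $N$ large, together with $y\in\O_K$ and the estimate $k-\tau_\p=y(k_1-\tau_\p)+(y-1)\tau_\p$, lands $k\rho_\p$ in $V_\p$ --- routine, but worth recording.
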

\begin{proof}
We may assume $\rho \in \hat{\O}_K$ because $\overline{K^*_+ a \rho} = \overline{K^*_+ \rho}$ for any $a \in \O_{K,+}$
and the right hand side is invariant under multiplication by an element of $\Af^*$. 
Take $\sigma$ from the right hand side. 
Enumerate the primes of $K$ as $\p_1,\p_2,\dots $. 
Define $\tau \in \Af$ by 
\begin{eqnarray*}
\tau_{\p} = \left\{ \begin{array}{llll} 
\rho_{\p}^{-1}\sigma_{\p} & \mbox{if } \rho_{\p} \neq 0, \\
0                         & \mbox{if } \rho_{\p} = 0. \end{array} \right. 
\end{eqnarray*}
Take $a \in \O_{K,+}$ satisfying $a\tau \in \hat{\O}_K$. 
For each $n$, take $k_n \in \O_{K,+}$ such that $k_n \equiv a\tau_{\p} \mbox{ mod } \p^n$ for $\p=\p_k$ with $1\leq k\leq n$. 
Then we have $a\sigma \in \hat{\O}_K$ and $k_n \rho_{\p} \equiv a\sigma_{\p} \mbox{ mod } \p^n$ for such $\p$. 
This implies that $k_n \rho$ converges to $a \sigma$ in $\Af$, so $a^{-1}k_n \rho$ converges to $\sigma$. 
The other inclusion is obvious. 
\end{proof}
\begin{lem} \label{Iaction}
For $x=[\rho,\alpha] \in X_K$, we have 
\begin{equation*}
\overline{J_K x} = \{ y=[\sigma, \beta] \in \Af\ |\ \rho_{\p}=0 \mbox{ {\rm implies} } \sigma_{\p}=0 \}.
\end{equation*}
\end{lem}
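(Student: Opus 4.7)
The plan is to reduce the two-coordinate statement to Lemma~\ref{Kaction} by first adjusting the Galois coordinate with a single idèle and then using elements of $K_+^*$ (which act trivially on the Galois coordinate) to move the $\Af$-coordinate.

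First I would check the easy inclusion $\overline{J_Kx}\subseteq \{[\sigma,\beta]:\rho_\p=0\Rightarrow\sigma_\p=0\}$. The right-hand side is closed in $X_K$: its preimage under the quotient $\Af\times\GKab\to X_K$ is the set $\{(\sigma,\beta):\rho_\p=0\Rightarrow\sigma_\p=0\}$, which is closed in $\Af\times\GKab$ and invariant under $\hat{\O}_K^*$ (multiplication at each place doesn't create new zeros), so descends to a closed set. Now for any $\a\in J_K$ represented by $a\in\Af^*$, the image $\a\cdot x=[\rho a,[a]_K^{-1}\alpha]$ still vanishes exactly where $\rho$ does because $a_\p\neq 0$ at every $\p$. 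Taking closures gives the inclusion.

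For the reverse inclusion, suppose $[\sigma,\beta]$ lies in the right-hand side. The key observation is that the reciprocity map $\Af^*\to\GKab$ is surjective (by the lemma preceding Lemma~\ref{fund}), so I can choose $a_0\in\Af^*$ with $[a_0]_K^{-1}\alpha=\beta$. Since $a_0\in\Af^*$, the element $\rho a_0\in\Af$ vanishes at exactly the same primes as $\rho$, so $\sigma\in\overline{K_+^*\cdot\rho a_0}$ by Lemma~\ref{Kaction}. Pick $k_n\in K_+^*$ with $k_n\rho a_0\to\sigma$ in $\Af$. Apply the action of the fractional ideal $(k_n a_0)\in J_K$ to $x$: since $K_+^*$ lies in the kernel of the reciprocity map $\Af^*\to\GKab$, we have $[k_n]_K=1$, hence
\begin{equation*}
(k_n a_0)\cdot x=[k_n a_0\rho,\,[k_n a_0]_K^{-1}\alpha]=[k_n a_0\rho,\,\beta].
\end{equation*}
By continuity of the quotient $\Af\times\GKab\to X_K$, this sequence converges to $[\sigma,\beta]$, which therefore lies in $\overline{J_Kx}$.

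The main obstacle is the bookkeeping with two coordinates tied together by the reciprocity map; the resolution is the decoupling trick just described, namely pre-adjusting the Galois component once with $a_0$ and then moving only along $K_+^*\subseteq\ker([\cdot]_K)$ so that subsequent motion preserves $\beta$. No genuinely new input beyond Lemma~\ref{Kaction} and the surjectivity of $[\cdot]_K$ (Lemma preceding~\ref{fund}) is needed.
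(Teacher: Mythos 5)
Your proof is correct and follows essentially the same route as the paper: choose a finite id\`ele $a_0$ adjusting the Galois coordinate to $\beta$, then apply Lemma~\ref{Kaction} to $\rho a_0$ and move along $(k_n)$ with $k_n \in K_+^*\subseteq\ker[\cdot]_K$. The only difference is that you also spell out the easy inclusion (closedness and $J_K$-invariance of the right-hand side), which the paper calls obvious.
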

\begin{proof}
Take $y=[\sigma,\beta]$ from the right hand side. 
Take a finite id\'ele $a \in \Af^*$ such that $\alpha [a]_K^{-1} = \beta$ and let $\a$ be the ideal generated by $a$. 
Then $\a [\rho,\alpha] = [\rho a, \beta]$. 
By Lemma~\ref{Kaction}, there exists a sequence $k_n \in K^*_+$ such that $k_n \rho a$ converges to $\sigma$. 
Since $[k_n]_K=1$, the sequence $(k_n)\a x$ converges to $y$. 
\end{proof}

As a conclusion, $\pi_g$'s have the same kernel although they are not unitarily equivalent. 
Indeed, by Theorem~\ref{Williams}, $\ker \pi_g = \ker \pi_h$ if and only if $\overline{J_K g} = \overline{J_K h}$. 
The condition $\overline{J_K g} = \overline{J_K h}$ is true for any $g,h$ by Lemma~\ref{Iaction}. 

In fact, we have the following proposition: 
\begin{prop} {\rm (cf.~\cite[Proposition 2.10]{LR})}
The representations $\pi_g$'s are faithful. 
\end{prop}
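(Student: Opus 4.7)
The plan is to lift the problem to the non-unital algebra $\tilde{A}_K$, where the structure is more transparent, and then transfer faithfulness down through the full-corner inclusion $A_K = 1_{Y_K}\tilde{A}_K 1_{Y_K}$. Concretely, $\pi_g$ is unitarily equivalent to $\pi_{[1,g],1}^0$, which is the compression of the extended irreducible representation $\tilde{\pi}_g := \pi_{[1,g],1}$ of $\tilde{A}_K$ on $\H_{[1,g],1}$ to the subspace $\tilde{\pi}_g(1_{Y_K})\H_{[1,g],1}$. If $a \in A_K$ satisfies $\pi_g(a) = 0$, then $\tilde{\pi}_g(a) = \tilde{\pi}_g(1_{Y_K}a 1_{Y_K}) = 0$; so once faithfulness of $\tilde{\pi}_g$ on $\tilde{A}_K$ is established, the proposition follows.

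Two features of the base point $x = [1, g] \in Y_K^*$ will do the work. First, by Lemma~\ref{Iaction}, the orbit $J_K \cdot x$ is dense in $X_K$: the representative $1 \in \hat{\O}_K$ has no vanishing $\p$-component, so the defining implication is vacuous and the closure is all of $X_K$. Second, the isotropy $J_{K,x}$ is trivial, because $(a) \cdot [1,g] = [1,g]$ forces $a \in \hat{\O}_K^*$, hence $(a) = (1)$. In particular $\{\xi_s\}_{s \in J_K}$ is an orthonormal basis of $\H_{x,1}$.

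From here, I would invoke the faithful canonical conditional expectation $E : \tilde{A}_K \to C_0(X_K)$ given on the algebraic crossed product by $E(\sum_t f_t u_t) = f_1$; its existence is standard since $J_K$ is abelian, hence amenable. A direct computation on finite sums, using $\tilde{\pi}_g(f_t u_t)\xi_s = f_t(ts\cdot x)\xi_{ts}$ together with the orthonormality of $\{\xi_s\}$, yields
\begin{equation*}
\langle \tilde{\pi}_g(a)\xi_s, \xi_s \rangle = E(a)(s \cdot x)
\end{equation*}
for all $s \in J_K$, and this extends by norm continuity of both sides to every $a \in \tilde{A}_K$. If $\tilde{\pi}_g(a) = 0$, then $\tilde{\pi}_g(a^*a) = 0$, so $E(a^*a)(s \cdot x) = 0$ for every $s$. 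The density of the orbit established above forces $E(a^*a)$ to vanish on all of $X_K$, and faithfulness of $E$ gives $a = 0$.

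The main obstacle is not analytic but largely bookkeeping: identifying the abstract $\pi_{[1,g],1}$ with the concrete $\pi_g$ on $\ell^2(I_K)$ after compressing by $1_{Y_K}$, and checking that the diagonal matrix coefficient $\langle \tilde{\pi}_g(a)\xi_s,\xi_s\rangle$ really reproduces $E(a)$ evaluated at the translate $s\cdot x$. Once these identifications are pinned down, the dense-orbit/conditional-expectation argument in the last paragraph finishes the proof cleanly.
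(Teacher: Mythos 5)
Your proof is correct and uses the same essential mechanism as the paper's: recover the faithful canonical conditional expectation onto the diagonal from the vector states $\langle\,\cdot\,\xi_s,\xi_s\rangle$, using the density of the orbit of $[1,g]$ supplied by Lemma~\ref{Iaction}. The only (harmless) difference is that you run the argument on the group crossed product $\tilde{A}_K$ and descend through the corner $1_{Y_K}\tilde{A}_K 1_{Y_K}$, whereas the paper works directly with $C(Y_K)\rtimes I_K$ and the dense orbit $\overline{I_K g}=Y_K$.
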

\begin{proof}
It suffices to see that the conditional expectation $E:C(Y_K) \rtimes I_K \rightarrow C(Y_K)$ is recovered by $\pi_g$. 
From Lemma~\ref{Iaction}, we have $\overline{I_K g} = Y_K$. 
Indeed, if the sequence $\a_n g$ for $\a_n \in J_K$ converges to some $x \in Y_K$, 
then $\a_n g \in Y_K$ for large $n$, which implies $\a_n \in I_K$ for large $n$. 
Hence $C(Y_K)$ can be embedded into $\prod_{\a \in I_K} \C$ by $f\mapsto \prod_{\a \in I_K} f(\a g)$. 
For $\a \in I_K$, let $\varphi_{\a}$ be the vector state $\langle \cdot \xi_{\a}, \xi_{\a} \rangle$ on $\mathbb{B}(\ell^2(I_K))$. 
Define a unital completely positive map $E'$ by 
\begin{equation*}
E' = \prod_{\a \in I_K} \varphi_{\a} : \mathbb{B}(\ell^2(I_K)) \rightarrow \prod_{\a \in I_K} \C. 
\end{equation*}
Then $E = E' \circ \pi_g$, which completes the proof. 
\end{proof}


\subsection{The formal description of the primitive ideal space} 
The purpose of this section is to study the equivalence relation that appeared in Section \ref{primitive} in detail. 
So this section amounts to an actual generalization of the work of Laca and Raeburn \cite{LR}. 
We have already studied quasi-orbits of $J_K$ in Lemma~\ref{Iaction}, so it suffices to see what the isotropy group is. 
Let $K$ be a number field. The symbol $\P_K$ denotes the set of all finite primes of $K$. 
For a finite subset $S$ of $\P_K$, define the subgroup $\Gamma_S$ of $J_K$ by 
\begin{equation*}
\Gamma_S = \{(a)\ |\ a \in \overline{K^*_+} \subset \Af^*, a_{\p}=1 \mbox{ for } \p \not\in S \}. 
\end{equation*} 
Note that $\Gamma_S$ is a subgroup of $P_K^1$, because $\overline{K^*_+}$ is contained in $K^*_+\hat{O}^*_K$. 
We can see that $\Gamma_{\emptyset} = 1$ and $\Gamma_{\P_K} = P_K^1$. 

For $x=[\rho,\alpha] \in X_K$, let $S_x=\{ \p \in \P_K\ |\ \rho_{\p}=0 \}$. 
By Lemma~\ref{Iaction}, for $x,y \in X_K$, $\overline{J_Kx} = \overline{J_Ky}$ if and only if $S_x=S_y$. 

\begin{lem} \label{isotropy} {\rm (cf.~\cite[Lemma 2.1]{LR})}
For $x \in X_K$, the isotropy group $J_{K,x}$ coincides with $\Gamma_{S_x}$. 
\end{lem}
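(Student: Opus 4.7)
The plan is to unfold the definition of the isotropy group $J_{K,x}$ directly, with the identification of the kernel of the Artin map from Lemma~\ref{fund} (or rather the preceding lemma) doing all the real work.

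Fix $x = [\rho,\alpha]$ and an ideal $\a \in J_K$, and pick some finite id\`ele $a \in \Af^*$ with $\a=(a)$. Then $\a \cdot x = x$ in $X_K = \Af \times_{\hat{\O}_K^*} \GKab$ if and only if there exists $s \in \hat{\O}_K^*$ such that
\begin{equation*}
(\rho a s,\ [as]_K^{-1}\alpha) = (\rho,\alpha)
\end{equation*}
in $\Af \times \GKab$. Setting $b = as$, this splits into the two conditions $\rho b = \rho$ and $[b]_K = 1$. Note that $(b) = \a$ since $s \in \hat{\O}_K^*$.

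Next I would translate each condition. The equality $[b]_K = 1$ is equivalent to $b \in \overline{K^*_+}$ by the Artin reciprocity lemma (the first lemma in Section~3, giving $\Af^*/\overline{K_+^*} \cong \GKab$). The componentwise equality $\rho b = \rho$ in $\Af$ says precisely that $b_{\p} = 1$ at every prime $\p$ where $\rho_{\p} \neq 0$, i.e.\ at every $\p \notin S_x$ (using that $K_{\p}$ is a field and $b_{\p} \in K_{\p}^*$). Taken together, these two conditions are exactly the defining conditions of $\Gamma_{S_x}$, so $\a \in J_{K,x}$ forces $\a \in \Gamma_{S_x}$.

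For the reverse inclusion I would simply reverse the argument: given $\a = (b)$ with $b \in \overline{K^*_+}$ and $b_{\p} = 1$ for $\p \notin S_x$, use $b$ itself as the id\`elic representative of $\a$ and take $s = 1$; both $\rho b = \rho$ and $[b]_K = 1$ hold by construction, hence $\a \cdot x = x$. There is no real obstacle here: the only subtle point is to keep careful track of which data lives in which quotient, and to invoke the identification of $\ker[\cdot]_K$ with $\overline{K^*_+}$ at the right moment.
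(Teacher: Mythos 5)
Your proposal is correct and follows essentially the same route as the paper: unwind the definition of the isotropy group in the quotient $\Af \times_{\hat{\O}_K^*} \GKab$, absorb the unit $s$ into the id\`elic representative, and identify $\ker [\cdot]_K \cap \Af^*$ with $\overline{K^*_+}$ via the reciprocity lemma to read off the defining conditions of $\Gamma_{S_x}$. The paper's proof is terser (it simply asserts that a generator $a$ with $[a]_K=1$ and $\rho a = \rho$ can be chosen), so your explicit bookkeeping with $b=as$ fills in exactly the step the paper leaves implicit.
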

\begin{proof}
Let $\a \in J_{K,x}$. Take $\rho \in \Af$ and $\alpha \in \GKab$ such that $x=[\rho,\alpha]$. 
Then we can choose a finite id\'ele $a \in \Af$ generating $\a$ and satisfies $[a]_K=1$ and $\rho a = \rho$. 
Hence $a$ belongs to $\overline{K^*_+}$ and $a_{\p} = 1$ for $\p$ satisfying $\rho_{\p} \neq 0$. 
This implies that $\a \in \Gamma_{S_x}$. The converse inclusion can be shown in a similar way. 
\end{proof}

Combining Lemma~\ref{Iaction}, Lemma~\ref{isotropy} and Theorem~\ref{Williams}, we get the following conclusion. 
\begin{thm} \label{structure}
We have $\displaystyle \mathrm{Prim} A_K = \bigcup_{S \subset \P} \hat{\Gamma}_S$, where $S$ runs through all subsets of $\P$. 
\end{thm}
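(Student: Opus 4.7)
The plan is to assemble the theorem directly from the lemmas already proved, since all the substantive arithmetic has been done in Lemma~\ref{Iaction} and Lemma~\ref{isotropy}. This final statement is really a parameterization of the equivalence classes appearing in Williams' theorem.

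First, I would reduce to $\tilde{A}_K$: by Proposition~\ref{Morita} applied to the $\R$-equivariant imprimitivity bimodule between $A_K$ and $\tilde{A}_K$, the Rieffel homeomorphism gives $\mathrm{Prim} A_K \cong \mathrm{Prim} \tilde{A}_K$ as topological spaces. Then Williams' Theorem~\ref{Williams} identifies $\mathrm{Prim} \tilde{A}_K$ with the quotient $X_K \times \hat{J}_K / \sim$, where $(x,\gamma) \sim (y,\delta)$ means $\overline{J_K x} = \overline{J_K y}$ and $\gamma\delta^{-1} \in J_{K,x}^\perp$.

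Next, I would analyze the two conditions separately. For the quasi-orbit relation, Lemma~\ref{Iaction} shows that $\overline{J_K x}$ is determined by the set $S_x = \{\p \in \P_K \mid \rho_\p = 0\}$, so the quasi-orbits of $J_K$ on $X_K$ are in bijection with the subsets $S \subset \P_K$. For the isotropy relation, Lemma~\ref{isotropy} shows that $J_{K,x} = \Gamma_{S_x}$, and in particular this depends only on the quasi-orbit of $x$. Therefore, fixing a subset $S \subset \P_K$ and any representative $x$ with $S_x = S$, the fiber of the quotient $X_K \times \hat{J}_K/\sim$ over the quasi-orbit corresponding to $S$ is in bijection with $\hat{J}_K / \Gamma_S^\perp$, which by Pontrjagin duality is canonically isomorphic to $\hat{\Gamma}_S$.

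Taking the union over all $S \subset \P_K$ gives the desired decomposition $\mathrm{Prim} A_K = \bigcup_{S \subset \P_K} \hat{\Gamma}_S$. I do not anticipate a serious obstacle here; the only mild subtlety is making sure the bijection between $\hat{J}_K/\Gamma_S^\perp$ and $\hat{\Gamma}_S$ is stated as a set identification (the topology on the union being inherited from the quotient topology on $X_K \times \hat{J}_K$, which in light of Remark~\ref{open} behaves well enough that no further verification is needed for the statement at the level of sets).
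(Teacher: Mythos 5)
Your proposal is correct and is exactly the argument the paper intends: the paper's "proof" is literally the sentence "Combining Lemma~\ref{Iaction}, Lemma~\ref{isotropy} and Theorem~\ref{Williams}, we get the following conclusion," and you have filled in precisely those steps (Morita reduction to $\tilde{A}_K$, Williams' parameterization, quasi-orbits indexed by $S_x$, isotropy equal to $\Gamma_{S_x}$, and the identification $\hat{J}_K/\Gamma_S^{\perp}\cong\hat{\Gamma}_S$, which is valid since $J_K$ is discrete so characters of $\Gamma_S$ extend). Your closing remark that the statement is only a set-level decomposition also matches the paper, which defers the topology to the subsequent proposition.
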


Theorem~\ref{structure} does not say anything about the topology of $\mathrm{Prim} A_K$. 
Actually, the only important fact is that the inclusion $\hat{\Gamma}_S \hookrightarrow \mathrm{Prim}A_K$ is a homeomorphism onto its range. 
However, we describe the topology of $\mathrm{Prim} A_K$ explicitly for the sake of completeness. 

\begin{dfn} (cf.~\cite[pp.437]{LR})
Let $2^{\P}$ be the power set of $\P$. 
The {\it power-cofinite topology} of $2^{\P}$ is the topology generated by 
\begin{equation*}
U_F = \{ S \in 2^{\P}\ |\ S \cap F = \emptyset \}, 
\end{equation*}
where $F$ is a finite subset of $\P$. 
\end{dfn}
Note that $\{U_F\}_F$ is a basis of the topology since we have $U_{F_1} \cap U_{F_2} = U_{F_1 \cup F_2}$. 

\begin{prop} {\rm (cf.~\cite[Proposition 2.4]{LR})}
The canonical surjection 
\begin{equation*}
Q:2^{\P} \times \hat{J}_K \rightarrow \bigcup_{S \subset \P} \hat{\Gamma}_S = \mathrm{Prim} A_K,
\ (S, \gamma) \mapsto \gamma|_{\Gamma_S} \in \hat{\Gamma}_S
\end{equation*}
is an open continuous surjection. 
\end{prop}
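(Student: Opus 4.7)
The plan is to unpack $Q$ through the open quotient map $q : X_K \times \hat{J}_K \to \mathrm{Prim}\, \tilde{A}_K$ given by Williams' theorem and Remark~\ref{open}, using Lemmas~\ref{Iaction} and~\ref{isotropy} to translate the $(S, \gamma)$ parameterization. For each $S \subseteq \P$ fix the canonical lift $x_S = [\rho_S, 1] \in X_K$, where $\rho_{S, \p} = 0$ if $\p \in S$ and $\rho_{S, \p} = 1$ otherwise; then $S_{x_S} = S$, $J_{K, x_S} = \Gamma_S$, and $Q(S, \gamma)$ is represented by $[x_S, \gamma]_\sim$. Surjectivity is immediate from Theorem~\ref{structure}.

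For continuity at $(S_0, \gamma_0)$, I take an open neighborhood $V$ of $Q(S_0, \gamma_0)$ in $\mathrm{Prim}\, \tilde{A}_K$ and choose a basic open neighborhood $U \times V_2$ of $(x_{S_0}, \gamma_0)$ inside $q^{-1}(V)$, with $U$ the image in $X_K$ of $B_1 \times B_2$, where $B_1 = \prod_{\p \in F} B_\p \times \prod_{\p \notin F} \O_{K_\p}$ is a basic neighborhood of $\rho_{S_0}$ in $\Af$ for some finite $F \subset \P$, $B_2$ is a neighborhood of $1$ in $\GKab$, and $V_2$ is a neighborhood of $\gamma_0$ in $\hat{J}_K$. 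I then claim that $Q(U_{F \setminus S_0} \times V_2) \subseteq V$. Given $(S, \gamma) \in U_{F \setminus S_0} \times V_2$, I build $\sigma \in B_1$ with $S_\sigma = S$ by declaring $\sigma_\p = 0$ for $\p \in S$ and $\sigma_\p$ nonzero otherwise, arranging $\sigma_\p \in B_\p$ at $\p \in F$. This is possible because $0 \in B_\p$ automatically whenever $\p \in F \cap S_0$, while the constraint $S \cap (F \setminus S_0) = \emptyset$ forces $\p \notin S$ at each $\p \in F \setminus S_0$, so one can take $\sigma_\p = 1 \in B_\p$ there. Then $([\sigma, 1], \gamma) \sim (x_S, \gamma)$, hence $Q(S, \gamma)$ lies in $V$.

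For openness, let $W_F = \{y \in X_K : S_y \cap F = \emptyset\}$, which is open in $X_K$ because its preimage in $\Af \times \GKab$ is the $\hat{\O}_K^*$-invariant open set $\{(\rho, \alpha) : \rho_\p \neq 0 \text{ for every } \p \in F\}$. Then $Q(U_F \times V_2) = q(W_F \times V_2)$: the inclusion $\subseteq$ uses $x_S \in W_F$ when $S \in U_F$, while $\supseteq$ uses $[y, \delta]_\sim = Q(S_y, \delta)$ for any $y \in W_F$. Since $q$ is open, $Q(U_F \times V_2)$ is open.

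The principal obstacle is the continuity argument, because the power-cofinite topology on $2^{\P}$ is strictly coarser than the product topology coming from the identification $2^{\P} \cong \{0, 1\}^{\P}$: convergence $S_n \to S_0$ gives no control at primes inside $S_0$. The resolution is that this one-sided control is matched exactly by the geometry of basic neighborhoods in $\Af$, where $0$ automatically lies in every neighborhood of a zero coordinate; this asymmetry is what forces the specific neighborhood $U_{F \setminus S_0}$ rather than one involving all of $F$.
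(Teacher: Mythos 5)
Your proof is correct and follows essentially the same route as the paper: both rest on the openness of the Williams quotient $X_K\times\hat{J}_K\to\mathrm{Prim}\,A_K$ (Remark~\ref{open}) together with the two basic-neighborhood computations relating open sets of $\Af$ to the power-cofinite topology --- your set $W_F$ and your construction of $\sigma$ are exactly the paper's formulas for $Q_1^{-1}(U_F\times W)$ and for the image of a basic neighborhood under $Q_1$. The only difference is organizational: the paper packages these computations into an intermediate open continuous surjection $Q_1\colon(x,\gamma)\mapsto(S_x,\gamma)$ and deduces the properties of $Q$ formally from $Q_2=Q\circ Q_1$, whereas you verify continuity and openness of $Q$ directly.
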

\begin{proof}
Define $Q_1: X_K \times \hat{J}_K \rightarrow 2^{\P} \times \hat{J}_K$ by sending $(x,\gamma)$ to $(S_x,\gamma)$. 
Let $Q_2:X_K \times \hat{J}_K\rightarrow \mathrm{Prim} A_K = X_K \times \hat{J}_K / \sim$ be the natural quotient map. 
Then we have $Q_2=Q \circ Q_1$. 
The quotient map $\Af \times \GKab \rightarrow \Af \times_{\hat{O}_K^*} \GKab = X_K$ is denoted by $R$. 
Then we can show in the same way as in \cite[Proposition 2.4]{LR} that  
\begin{eqnarray*}
Q_1\left( R\left( \prod_{\p \in F} V_{\p} \times \prod_{\p \not\in F} \O_{K,\p} \times V \right) \times W \right) 
= U_G \times W,\ \mbox{and} \\
Q_1^{-1}(U_F \times W) = R\left( \prod_{\p \in F} K_{\p}^* \times \restprod_{\p \not\in F} (K_{\p},\hat{O}_{K_{\p}}) \times \GKab \right) \times W
\end{eqnarray*}
for a finite set $F$ of $\P$, non-empty open sets $V_{\p}$ of $K_{\p}$, $V$ of $\GKab$ and $W$ of $\hat{J}_K$, 
where $G=\{\p \in F\ |\ 0 \not\in V_{\p}\}$. 
This means that $Q_1$ is open and continuous. Since $Q_1$ is surjective and $Q_2=Q \circ Q_1$ is open and continuous by Remark~\ref{open}, 
$Q$ is also an open and continuous surjection. 
\end{proof}

Let us briefly view when two points in $\mathrm{Prim} A_K$ can be separated by open sets. Take two distinct subsets $S_1, S_2$ of $\P$. 
If $S_1 \not\subset S_2$, 
then $Q(U_G \times \hat{J}_K) \cap \hat{\Gamma}_{S_1} = \emptyset$ and $Q(U_G \times \hat{J}_K) \supset \hat{\Gamma}_{S_2}$ 
for any finite subset $G$ of $S_1 \setminus S_2$. 
Hence, if $S_1 \not\subset S_2$ and $S_2 \not\subset S_1$, 
then $\hat{\Gamma}_{S_1} \cup \hat{\Gamma}_{S_2}$ is Hausdorff with respect to the relative topology. 
If $S_1 \subset S_2$, then any open set which contains $\hat{\Gamma}_{S_2}$ also contains $\hat{\Gamma}_{S_1}$. 

We can say that $\Prim A_K$ is a bundle over $2^{\P}$ with fibers $\hat{\Gamma}_S$. 
In other words, $\Prim A_K$ is considered as a net of compact groups indexed by subsets of $\P$. 
It seems difficult to determine the group $\Gamma_S$ in general. 
However, if $K=\Q$ or $K$ is imaginary quadratic, then $\Gamma_S$ is trivial for $S \neq \P$ because $K^*_+$ is closed in $\Af^*$. 
In such cases, we have
\begin{equation*}
\mathrm{Prim} A_K = 2^{\P}\setminus \{\P\} \cup \hat{P}_K^1. 
\end{equation*}

\begin{prop}
Let $K,L$ be imaginary quadratic fields. Then any $\R$-equivariant homeomorphism $\Prim A_K\rightarrow \Prim A_L$ induces 
an $\R$-equivariant homeomorphism $\hat{P}_K^1 \rightarrow \hat{P}_L^1$. 
In particular, if $A_K$ and $A_L$ are $\R$-equivariantly Morita equivalent, then the conclusion of {\rm Proposition~\ref{homeo}} is true. 
\end{prop}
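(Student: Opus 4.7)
The plan is to give a purely $\R$-topological description of $\hat{P}_K^1$ inside $\Prim A_K$, from which the proposition becomes essentially formal. Concretely, I would prove
\begin{equation*}
\hat{P}_K^1 \;=\; \Prim A_K \setminus \mathrm{Fix}(\sigma),
\end{equation*}
where $\mathrm{Fix}(\sigma)$ denotes the set of fixed points of the induced $\R$-action. Once this is established, any $\R$-equivariant homeomorphism $\Phi:\Prim A_K \to \Prim A_L$ must send $\mathrm{Fix}(\sigma_K)$ bijectively onto $\mathrm{Fix}(\sigma_L)$, and hence restricts to an $\R$-equivariant homeomorphism $\hat{P}_K^1 \to \hat{P}_L^1$.

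To verify the displayed equality I would use the decomposition $\Prim A_K = (2^{\P_K}\setminus\{\P_K\}) \cup \hat{P}_K^1$ recorded immediately before the proposition. For each $S \subsetneq \P_K$ the imaginary quadratic hypothesis gives $\Gamma_S = 1$, so the fiber $\hat{\Gamma}_S$ is a single point on which the fiberwise action furnished by Proposition~\ref{dyn} is necessarily trivial; hence every such point is $\R$-fixed. Conversely, for $\gamma \in \hat{P}_K^1$ Proposition~\ref{dyn} gives $\sigma_t(\gamma) = N(\cdot)^{it}\gamma$, and since $\gamma$ is a character (hence nowhere vanishing as a $\T$-valued function), $\sigma_t(\gamma) = \gamma$ for all $t \in \R$ would force $N(\a)^{it}=1$ for every $\a \in P_K^1$ and every $t$, i.e.\ $N \equiv 1$ on $P_K^1$. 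But $P_K^1$ contains principal ideals of norm strictly greater than $1$ (e.g.\ the ideal $(p)\O_K$ generated by any rational prime $p$, whose norm is $p^2$), so no $\gamma \in \hat{P}_K^1$ is fixed.

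For the ``in particular'' clause, an $\R$-equivariant Morita equivalence between $(A_K,\sigma_{t,K})$ and $(A_L,\sigma_{t,L})$ yields an $\R$-equivariant homeomorphism $\Prim A_K \to \Prim A_L$ by Proposition~\ref{Morita}, to which the first assertion applies directly.

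The only substantive step is the dynamical characterization, and the feature of imaginary quadratic (and rational) fields that makes it work is the triviality of $\Gamma_S$ for $S \neq \P_K$: all ``non-generic'' strata of $\Prim A_K$ collapse to fixed points of $\sigma$. For a general number field those strata are themselves nontrivial tori on which the norm acts, so the non-fixed locus would strictly exceed $\hat{P}_K^1$ and this particular approach would need to be refined.
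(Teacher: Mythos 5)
Your proof is correct and follows essentially the same route as the paper: both use the decomposition $\Prim A_K = (2^{\P}\setminus\{\P\}) \cup \hat{P}_K^1$ with $\Gamma_S$ trivial for $S\neq\P$ in the imaginary quadratic case, and Proposition~\ref{dyn} to see that the strata $2^{\P}\setminus\{\P\}$ are $\R$-fixed while points of $\hat{P}_K^1$ are not. The paper phrases the dynamical distinction as ``an infinite orbit cannot land on a fixed point'' rather than ``the fixed-point set is preserved,'' but this is the same argument.
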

\begin{proof}
Let $\Phi: \mathrm{Prim} A_K \rightarrow \mathrm{Prim} A_L$ be an $\R$-equivariant homeomorphism. 
It suffices to show that $\Phi(\hat{P}_K)=\hat{P}_L$. 
By Proposition~\ref{dyn}, $\R$ acts on $2^{\P}\setminus \{\P\}$ trivially and acts on $\hat{P}_K$ as in Section \ref{Pre}. 
Let $\gamma \in \hat{P}_K$ and suppose $\Phi(\gamma) \not\in \hat{P}_L$. 
Then we have $\Phi(\gamma)=x$ for some $x \in 2^{\P}\setminus \{\P\}$. Since $\Phi$ is $\R$-equivariant, we have $\Phi(\R \cdot \gamma)=x$. 
However, the orbit of $\gamma$ is clearly an infinite set, which is a contradiction. 
Therefore $\Phi(\gamma) \in \hat{P}_L$, so we have $\Phi(\hat{P}_K) \subset \hat{P}_L$. 
Hence, by symmetry, we have $\Phi(\hat{P}_K)=\hat{P}_L$. 
\end{proof}

\section{The Dynamics of $\hat{P}_K^1$} \label{last}
In this section, we prove the second main theorem. 
\begin{thm} \label{isom}
Let $K,L$ be number fields. If their Bost-Connes systems $(A_K,\sigma_{t,K})$ and $(A_L,\sigma_{t,L})$ are 
$\R$-equivariantly isomorphic, then we have a group isomorphism $P_K^1\rightarrow P_L^1$ which preserves the norm map. 
\end{thm}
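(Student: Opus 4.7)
The plan is to combine Proposition~\ref{homeo}, which supplies an $\R$-equivariant homeomorphism $\Phi:\hat{P}_K^1\to\hat{P}_L^1$, with an intrinsic procedure that recovers $(P_K^1,N_K)$ from the topological flow $(\hat{P}_K^1,\sigma_{t,K})$. For a compact Hausdorff $\R$-space $(X,\sigma)$, set
\begin{equation*}
E(X,\sigma)=\{f\in C(X,\T):f\circ\sigma_t=e^{it\lambda}f\text{ for some }\lambda\in\R\}/\T,
\end{equation*}
the group of $\T$-valued continuous eigenfunctions modulo constants, together with the eigenvalue map $\mathrm{ev}:E(X,\sigma)\to\R$. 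Since precomposition with an $\R$-equivariant homeomorphism carries eigenfunctions to eigenfunctions of the same eigenvalue, $\Phi$ induces a group isomorphism $E(\hat{P}_L^1,\sigma_{t,L})\to E(\hat{P}_K^1,\sigma_{t,K})$ commuting with $\mathrm{ev}$.

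The preliminary step is to observe that $\sigma_{t,K}$ is a minimal flow. It acts on the compact abelian group $\hat{P}_K^1$ by translation by the one-parameter subgroup $h(t)=N(\cdot)^{it}$, so minimality is equivalent to density of $h(\R)$, which by Pontryagin duality amounts to injectivity of $\log N:P_K^1\to\R$. Injectivity holds because $N((a))=1$ for a totally positive $a$ forces $a$ to be a totally positive unit, i.e.\ trivial in $P_K^1=K^*_+/\O_{K,+}^*$. Minimality yields two facts: continuous $\R$-invariant functions on $\hat{P}_K^1$ are constant, and two $\T$-valued eigenfunctions sharing an eigenvalue have constant ratio, so $\mathrm{ev}$ is injective on $E(\hat{P}_K^1,\sigma_{t,K})$.

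Next I identify $E(\hat{P}_K^1,\sigma_{t,K})$ with $(P_K^1,\log N)$. Given a continuous eigenfunction $f:\hat{P}_K^1\to\T$ of eigenvalue $\lambda$ and any $\gamma_0\in\hat{P}_K^1$, the function $\gamma\mapsto f(\gamma\gamma_0)/f(\gamma_0)$ is an eigenfunction of the same eigenvalue; by injectivity of $\mathrm{ev}$ it differs from $f$ by a constant, which is determined to be $f(1)^{-1}$ by evaluating at the identity. Expanding this identity yields the multiplicativity of $\chi:=f/f(1)$, so $\chi$ is a continuous character of $\hat{P}_K^1$; by Pontryagin duality $\chi=\chi_a:\gamma\mapsto\gamma(a)$ for a unique $a\in P_K^1$, and $\mathrm{ev}(\chi_a)=\log N(a)$ by direct computation. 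Hence the assignment $\chi_a\mapsto a$ yields an isomorphism $E(\hat{P}_K^1,\sigma_{t,K})\cong P_K^1$ carrying $\mathrm{ev}$ to $\log N$, and the analogous identification holds for $L$.

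Composing with the isomorphism $E(\hat{P}_L^1)\to E(\hat{P}_K^1)$ induced by $\Phi$ then produces a group isomorphism $P_L^1\to P_K^1$ intertwining $\log N_L$ and $\log N_K$, which is equivalent to preservation of the multiplicative norms. The main technical step is the eigenfunction-to-character argument, which genuinely invokes the compact abelian group structure of $\hat{P}_K^1$; this is legitimate because the argument is applied intrinsically on each side and does not require $\Phi$ to respect the group laws, only the flows.
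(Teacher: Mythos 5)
Your overall architecture---pass to the flow on $\hat{P}_K^1$ via Proposition~\ref{homeo} and then recover $(P_K^1,N)$ intrinsically from that flow---is the same as the paper's, but the execution contains a genuine gap: the flow $\sigma_{t,K}$ on $\hat{P}_K^1$ is \emph{not} minimal for $K\neq\Q$. Minimality would indeed follow from injectivity of $N:P_K^1\rightarrow\Q_+^*$, but your argument for injectivity conflates ``$N_{K/\Q}(a)=1$'' with ``$a$ is a unit,'' which is only valid for integral $a$. The ideal norm has a large kernel on $P_K^1$: if a rational prime $p$ has two distinct primes $\p_1,\p_2$ above it with $N(\p_1)=N(\p_2)$ (totally split primes exist in abundance), then $(\p_1\p_2^{-1})^{h_K^1}$ is a nontrivial element of $P_K^1$ killed by $N$. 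Concretely, in $K=\Q(i)$ the element $a=(1+2i)/(1-2i)$ is totally positive (vacuously) with $N_{K/\Q}(a)=1$, yet $(a)=\p\bar{\p}^{-1}\neq(1)$. In fact $\ker(N|_{P_K^1})$ is free abelian of infinite rank for every $K\neq\Q$. This failure propagates: $\mathrm{ev}$ is not injective on $E(\hat{P}_K^1,\sigma_{t,K})$, so the step ``$f(\gamma\gamma_0)/f(\gamma_0)$ differs from $f$ by a constant'' is unjustified, and the conclusion $E\cong P_K^1$ is false. The eigenvalue-zero part of $E$ consists of all continuous $\T$-valued functions on $\hat{P}_K^1/\overline{\{N(\cdot)^{it}\}}\cong\widehat{\ker N}\cong\T^{\infty}$ modulo constants, which is far larger than $\ker N$ (it contains, e.g., the pullback of any continuous map $\T\rightarrow\T$ that is not a constant times a character), so most eigenfunctions are not of the form you claim.

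The paper circumvents exactly this issue in Lemma~\ref{decomp} and Proposition~\ref{deform}: it splits $P_K^1=\bigoplus_j a_j^{\Z}\oplus\ker N$, so that $\hat{P}_K^1\cong\prod_j\T_j\times\T^{\infty}$ with the flow minimal on the first factor and trivial on the second. An $\R$-equivariant homeomorphism must permute the orbit closures $\prod_j\T_j\times\{x\}$, and on a single orbit closure your normalize-at-the-identity-and-use-density-of-one-orbit argument does work and produces a norm-compatible isomorphism $\bigoplus_j a_j^{\Z}\rightarrow\bigoplus_k b_k^{\Z}$; the kernels of the two norm maps are then matched by an arbitrary isomorphism of $\T^{\infty}$, both being free abelian of countably infinite rank. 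If you wish to salvage your eigenfunction formalism, you would need to replace ``eigenfunctions modulo constants'' by something like homotopy classes of eigenfunctions, so that the eigenvalue-zero part collapses to $H^1(\widehat{\ker N};\Z)\cong\ker N$, and then verify functoriality under $\R$-equivariant homeomorphisms; as written, the proof does not go through.
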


Since we have Proposition~\ref{homeo}, the above theorem is reduced to the following proposition: 

\begin{prop} \label{deform}
Let $K,L$ be number fields. If $\hat{P}_K^1$ and $\hat{P}^1_L$ are $\R$-equivariantly homeomorphic, 
then there exists an $\R$-equivariant isomorphism between them. 
\end{prop}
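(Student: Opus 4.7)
The plan is to translate the problem via Pontrjagin duality: an $\R$-equivariant topological group isomorphism $\hat{P}_K^1 \to \hat{P}_L^1$ is the same datum as a group isomorphism $P_L^1 \to P_K^1$ commuting with the norm maps, because $\sigma_t^K$ is translation on $\hat{P}_K^1$ by the character $\chi_t^K \colon a \mapsto N(a)^{it}$, and $\R$-equivariance of the dualized homomorphism translates precisely to preservation of norms. So from the given $\R$-equivariant homeomorphism $\phi \colon \hat{P}_K^1 \to \hat{P}_L^1$ I want to extract such a norm-preserving isomorphism $\phi^* \colon P_L^1 \to P_K^1$.

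The first step is to produce an abstract group isomorphism $\phi^*$ by means of first \v{C}ech cohomology. Since $P_K^1$ is torsion-free (a subgroup of the free abelian group $J_K$), its Pontrjagin dual $\hat{P}_K^1$ is a connected compact abelian group, and there is the standard identification $H^1(\hat{P}_K^1, \Z) \cong P_K^1$ under which the class of a continuous map $f \colon \hat{P}_K^1 \to \T$ is represented by the unique character of $\hat{P}_K^1$ homotopic to $f$. Functoriality of $H^1$ applied to the homeomorphism $\phi$ then produces a group isomorphism $\phi^* \colon P_L^1 \to P_K^1$, where $\phi^*(y)$ is characterized as the character of $\hat{P}_K^1$ homotopic to the pullback $y \circ \phi$.

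The real content is verifying that $\phi^*$ preserves norms. For $y \in P_L^1$, the identity $y \circ \sigma_t^L = N(y)^{it} y$ together with $\R$-equivariance of $\phi$ yields $f \circ \sigma_t^K = N(y)^{it} f$ where $f := y \circ \phi$. Set $x := \phi^*(y)$ with associated character $\chi_x$ on $\hat{P}_K^1$, and consider $g := f / \chi_x$: this continuous $\T$-valued function is null-homotopic by construction, so it lifts to a continuous $F \colon \hat{P}_K^1 \to \R$ with $g = e^{iF}$. Writing $\beta := \log(N(y)/N(x))$, the eigenvalue equation for $g$ becomes $F(\chi_t^K \gamma) - F(\gamma) - \beta t \in 2\pi \Z$ for all $(t, \gamma) \in \R \times \hat{P}_K^1$; the left-hand side is continuous, vanishes at $t = 0$, and the domain $\R \times \hat{P}_K^1$ is connected, so the integer is identically zero. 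Thus $F(\chi_t^K \gamma) = F(\gamma) + \beta t$, but $F$ is bounded on the compact space $\hat{P}_K^1$, forcing $\beta = 0$ and hence $N(\phi^*(y)) = N(y)$.

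Dualizing the norm-preserving isomorphism $\phi^*$ then yields the sought $\R$-equivariant topological group isomorphism. The main obstacle I expect is the last paragraph: one has to handle the null-homotopic lifting carefully (using connectedness of $\hat{P}_K^1$, inherited from torsion-freeness of $P_K^1$) and argue via boundedness of $F$ on a compact group. The identification $H^1(\hat{P}_K^1, \Z) \cong P_K^1$ for compact connected abelian groups and Pontrjagin duality are standard and invoked without further elaboration.
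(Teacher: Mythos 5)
Your proposal is correct, but it takes a genuinely different route from the paper's. The paper works on the compact side: using that $N(P_K^1)$ is a free subgroup of $\Q_+^*$ it splits $P_K^1=\bigoplus_j a_j^{\Z}\oplus\ker N$, so that $\hat{P}_K^1\cong(\prod_j\T_j\times\T^{\infty},\ \prod_j N(a_j)^{it}\times 1)$; multiplicative independence of the rational numbers $N(a_j)$ makes the flow minimal on each fiber $\prod_j\T_j\times\{x\}$, so the given homeomorphism permutes these orbit closures, and after translating so that $1\mapsto 1$ the restriction to the distinguished fiber carries the dense orbit $\{\sigma_{2\pi l}(1)\}_{l\in\Z}$ to its counterpart and is therefore a group isomorphism by continuity; the norm-trivial factor $\T^{\infty}$ is then matched by an arbitrary isomorphism, which is why the paper's isomorphism is non-canonical. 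You instead work on the discrete side via $\check{H}^1(\hat{P}_K^1;\Z)\cong P_K^1$, extract a canonical group isomorphism $P_L^1\to P_K^1$ from functoriality of $H^1$ under the homeomorphism, and prove norm-preservation by the lifting argument: the computation $g\circ\sigma_t=(N(y)/N(x))^{it}g$, the lift $g=e^{iF}$, the vanishing of the $2\pi\Z$-valued discrepancy on the connected domain $\R\times\hat{P}_K^1$, and the boundedness of $F$ are all sound (joint continuity of the flow holds because $\hat{P}_K^1$ carries the topology of pointwise convergence on $P_K^1$), and dualizing the resulting norm-preserving isomorphism indeed yields the required $\R$-equivariant isomorphism. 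Your route buys generality and canonicity --- it needs neither the splitting of the norm map nor the rationality of its values, only that $P_K^1$ is torsion-free --- at the cost of importing the (standard but nontrivial) theorem that the characters of a compact connected abelian group exhaust $[G,\T]\cong\check{H}^1(G;\Z)$, which is precisely the input replacing the paper's minimality argument.
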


\begin{rmk}
If $\hat{\varphi}: \hat{P}_L^1 \rightarrow \hat{P}_K^1$ is an $\R$-equivariant isomorphism, then the isomorphism $\varphi:P_K^1 \rightarrow P_L^1$ 
induced by $\hat{\varphi}$ preserves the norm. 
Indeed, let $a \in P_K^1$ and $b=\varphi(a) \in P_L^1$. Then, by taking the Pontrjagin duals, we have the following commutative diagram: 
\begin{eqnarray*}
\xymatrix{
\hat{P}_L^1 \ar[r]^{\sim }_{\hat{\varphi}} \ar[d] & \hat{P}_K^1 \ar[d] \\
(\hat{b^{\Z}},N(b)^{it}) \ar[r]^{\sim } & (\hat{a^{\Z}},N(a)^{it}). 
}
\end{eqnarray*}
The isomorphism $\hat{\varphi}$ is $\R$-equivariant by assumption, and it is easy to show that the vertical maps are $\R$-equivariant. 
Using these facts, we can show that the isomorphism $\hat{b^{\Z}}\rightarrow \hat{a^{\Z}}$ is $\R$-equivariant. 
This implies that $N(a)=N(b)$. 
\end{rmk}

Note that the isomorphism in Proposition~\ref{deform} is not canonical. 
The key observation is that the space $\hat{P}_K^1$ has a nice orbit decomposition. 

\begin{lem} \label{decomp}
Let $K$ be a number field. 
The compact group $\hat{P}^1_K$ is $\R$-equivariantly isomorphic to $(\prod_{j=1}^{\infty} \T_j \times \T^{\infty}, \prod_{j=1}^{\infty} n_j^{it} \times 1)$, 
where $n_j > 1$ and $\{n_j\}$ is linearly independent over $\Z$ in the free abelian group $\Q_+^*$. 
\end{lem}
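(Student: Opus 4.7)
The plan is to decompose $P_K^1$ via the norm map. First recall that $P_K^1$, being a finite-index subgroup of the free abelian group $J_K$ (which is free on the countable set of finite primes), is itself free abelian of countably infinite rank; this matches the description of $\hat P_K^1$ as an infinite-dimensional torus given in Section \ref{predyn}. The norm factors as $N\colon P_K^1 \twoheadrightarrow M \hookrightarrow \Q_+^*$, where $M := N(P_K^1)$. Since $\Q_+^*$ is free abelian on the rational primes and $M$ is a subgroup, $M$ is free abelian. Moreover $M$ has countably infinite rank because it contains the element $p^{[K:\Q]} = N((p))$ for every rational prime $p$. Choose a basis $\{n_j\}_{j\ge 1}$ of $M$ and, by replacing $n_j$ with $n_j^{-1}$ when necessary, arrange $n_j>1$. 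Since $\{n_j\}$ is a $\Z$-basis of the free subgroup $M$ of the free abelian group $\Q_+^*$, it is automatically $\Z$-linearly independent there.

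Next, I use that $M$ is free to split the short exact sequence
\begin{equation*}
0 \to \ker N \to P_K^1 \to M \to 0,
\end{equation*}
obtaining $P_K^1 \cong \ker N \oplus M$. Taking Pontrjagin duals yields a topological group decomposition $\hat P_K^1 \cong \widehat{\ker N}\times \hat M$. Identifying $\hat M$ via the basis $\{n_j\}$ gives $\hat M \cong \prod_{j=1}^{\infty}\T_j$, and since $\ker N$ is a subgroup of a free abelian group it is itself free abelian, so $\widehat{\ker N}\cong \T^{\infty}$ (interpreted as a product of circles indexed by the rank of $\ker N$).

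It remains to match the $\R$-actions. By definition $\langle x,\sigma_t\gamma\rangle = N(x)^{it}\langle x,\gamma\rangle$, so the action depends only on the image of $x$ in $M$; hence $\sigma_t$ is trivial on the $\widehat{\ker N}$ factor, and on the $\hat M$ factor it translates a character $\gamma$ by $z\mapsto z^{it}$, which in the coordinates dual to $\{n_j\}$ is exactly multiplication by $(n_j^{it})_j$. This produces the desired $\R$-equivariant isomorphism. The only non-formal input is verifying that the $\widehat{\ker N}$-factor deserves the label $\T^{\infty}$; this reduces to showing that $\ker N$ has countably infinite rank, which can be produced by exhibiting, for each of the infinitely many rational primes splitting in $K$ into two primes $\p_1,\p_2$ of equal residue degree, the $\Z$-linearly independent elements $(\p_1\p_2^{-1})^{h_K^1}\in\ker N$. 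The rest is pure Pontrjagin duality applied to the split norm sequence.
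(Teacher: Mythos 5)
Your proof is correct and follows essentially the same route as the paper's: split the exact sequence $0\to\ker N\to P_K^1\to N(P_K^1)\to 0$ using freeness of the image, choose a basis of the complementary summand, and take Pontrjagin duals; you merely supply details (the normalization $n_j>1$, the explicit matching of the $\R$-actions, the rank counts) that the paper leaves implicit. The only blemish is your closing rank argument for $\ker N$, which fails for $K=\Q$ (there $\ker N=0$ and no rational prime splits), but this is an edge case in the statement's use of $\T^{\infty}$ that the paper's own proof does not address either.
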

\begin{proof}
Let $N:P_K^1 \rightarrow \Q_+^*$ be the ideal norm and let $A=N(P_K^1)$. 
Then the exact sequence
\begin{eqnarray*}
\xymatrix {
0 \ar[r] & \ker N \ar[r] & P_K^1 \ar[r]^{N} & A \ar[r] & 0 }
\end{eqnarray*}
splits, because $\ker N, P_K^1$ and $A$ are all free abelian groups. 
Let $s:A\rightarrow P_K^1$ be the splitting of $N$, and take a basis $\{a_j\}_j$ of $s(A)$. 
Then we have the decomposition 
\begin{equation*}
P_K^1 = \bigoplus_j a_j^{\Z} \oplus \ker N. 
\end{equation*}
Taking the Pontrjagin duals, we have the desired decomposition. 
\end{proof}

\begin{rmk} \label{minimal}
The condition that $\{n_j\}$ is linearly independent in $\Q_+^*$ means that 
the homeomorphism on $\prod_j \T_j$ by multiplying $\prod_j n_j^{it}$ is minimal for appropriate $t \in \R$. 
Indeed, the family $\{ 1, \frac{t}{2\pi} \log n_j \}$ is linearly independent over $\Q$ if we choose $t=2\pi$. 
\end{rmk}

\begin{proof}[Proof of Proposition~\ref{deform}]
Let $\varphi: \hat{P}^1_K \rightarrow \hat{P}^1_L$ be an $\R$-equivariant homeomorphism. 
Take the decomposition
\begin{eqnarray*}
&&P_K^1 = \bigoplus a_j^{\Z} \oplus \ker N_K, \hat{P}^1_K = (\prod_j \T_j \times \T^{\infty}, \prod_j N(a_j)^{it} \times 1), \\
&&P_L^1 = \bigoplus b_k^{\Z} \oplus \ker N_L, \hat{P}^1_L = (\prod_k \T_k \times \T^{\infty}, \prod_k N(b_k)^{it} \times 1)
\end{eqnarray*}
as in Lemma~\ref{decomp}. 
By Remark~\ref{minimal}, We have the closed orbit decomposition
\begin{eqnarray*}
\hat{P}^1_K = \coprod_{x \in \T^{\infty}}\prod_j \T_j \times \{x\}, 
\hat{P}^1_L = \coprod_{y \in \T^{\infty}}\prod_k \T_k \times \{y\}. 
\end{eqnarray*}
Hence we have $\varphi(\prod_j \T_j \times \{1\}) = \prod_k \T_k \times \{y\}$ for some $y \in  \T^{\infty}$, 
so $\varphi$ induces an $\R$-equivariant homeomorphism 
\begin{equation*}
\bar{\varphi} : (\prod_j \T_j, \prod_j N(a_j)^{it}) \rightarrow (\prod_k \T_k, \prod_k N(b_k)^{it}). 
\end{equation*}
Let $\psi = \bar{\varphi}(1)^{-1}\bar{\varphi}$ and $x = \prod_j N(a_j)^{2\pi i}, y = \prod_k N(b_k)^{2\pi i}$. 
Then we have $\psi(a^l) = b^l$ for any $l \in \Z$. 
Hence $\psi$ is an $\R$-equivariant group isomorphism, since $a$ and $b$ generates dense subgroups in $\prod_j \T_j$ and $\prod_k \T_k$ respectively. 
Taking any group isomorphism $\tau$ of $\T^{\infty}$, we obtain an $\R$-equivariant group isomorphism $\psi \times \tau: \hat{P}^1_K\rightarrow \hat{P}^1_L$. 
\end{proof}

\begin{rmk}
By the classification theorem of the KMS-states in \cite{LLN}, we know that if the Bost-Connes systems of two number fields $K,L$ are isomorphic then 
their Dedekind zeta functions are the same, which implies that there exists a group isomorphism $J_K\rightarrow J_L$ which preserves the norm. 
\end{rmk}
By Theorem~\ref{isom}, the pair $(P_K^1, N:P_K^1\rightarrow \Q^*_+)$ is an invariant of Bost-Connes systems. 
The difference between $(P_K^1, N:P_K^1\rightarrow \Q^*_+)$ and $(J_K, N:J_K\rightarrow \Q^*_+)$ is thought to be very subtle 
because $P_K^1$ is of finite index in $J_K$. We do not know what difference exists between the two invariants. 
Instead, we can see that large information which is obtained by $(J_K, N:J_K\rightarrow \Q^*_+)$ 
can also be obtained by $(P_K^1, N:P_K^1\rightarrow \Q^*_+)$. Here is an example: 
\begin{prop}
Let $K,L$ be number fields with $n=[K:\Q]=[L:\Q]$. 
Suppose that there exists a group isomorphism $P_K^1\rightarrow P_L^1$ which preserves the norm. 
Then for rational prime $p$, $p$ is non-split in $K$ if and only if $p$ is non-split in $L$. 
\end{prop}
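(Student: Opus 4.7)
The plan is to recover the number $g_K(p)$ of primes of $K$ lying above a given rational prime $p$ directly from the invariant $(P_K^1, N\colon P_K^1\to\Q_+^*)$, since $p$ being non-split is precisely the condition $g_K(p)=1$. The key subgroup to look at is
\[
G_K(p) := N^{-1}(p^{\Z}) = \{\a \in P_K^1 \mid N(\a) \text{ is an integer power of } p\}.
\]
Because $N(\q)$ is a power of the rational prime lying under $\q$, and distinct rational primes give multiplicatively independent prime powers in $\Q_+^*$, unique factorization in $J_K$ shows that $\a \in P_K^1$ lies in $G_K(p)$ if and only if $\a$ is supported on the primes of $K$ above $p$. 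Equivalently, $G_K(p) = P_K^1 \cap H_K(p)$, where $H_K(p) := \bigoplus_{\p \mid p}\p^{\Z} \subset J_K$.

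Next, I would determine the rank of $G_K(p)$. The inclusion $G_K(p)\hookrightarrow H_K(p)$ yields an injection $H_K(p)/G_K(p) \hookrightarrow J_K/P_K^1 = C_K^1$, and the narrow class group $C_K^1$ is finite. Hence $G_K(p)$ has finite index in the free abelian group $H_K(p)\cong \Z^{g_K(p)}$, so $\operatorname{rank} G_K(p) = g_K(p)$.

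Finally, any group isomorphism $\varphi\colon P_K^1\to P_L^1$ intertwining the norm maps carries $N^{-1}(p^{\Z})$ to $N^{-1}(p^{\Z})$, hence restricts to an isomorphism $G_K(p)\cong G_L(p)$. Comparing ranks gives $g_K(p)=g_L(p)$, and in particular $p$ is non-split in $K$ iff it is non-split in $L$. There is no serious obstacle here; the only step to verify carefully is the identification $G_K(p)=P_K^1\cap H_K(p)$, which rests on unique factorization of ideals and the multiplicative independence of distinct rational primes. As a remark, the degree hypothesis $[K:\Q]=[L:\Q]$ is not actually used, and the argument in fact recovers the full splitting number $g_K(p)$, not merely the split/non-split dichotomy.
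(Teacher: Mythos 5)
Your reduction to the subgroup $G_K(p)=N^{-1}(p^{\Z})$ founders on the step you yourself flagged as the one to check: the identification $N^{-1}(p^{\Z})=P_K^1\cap\bigoplus_{\p\mid p}\p^{\Z}$ is false for every $K\neq\Q$. Writing $\a=\prod_{\q}\q^{m_{\q}}$, the condition $N(\a)\in p^{\Z}$ only says that for each rational prime $\ell\neq p$ one has $\sum_{\q\mid\ell}f_{\q}m_{\q}=0$; multiplicative independence of distinct rational primes gives you this relation, but it does not force the individual exponents $m_{\q}$ to vanish when $\ell$ has more than one prime above it. Concretely, if $\ell\neq p$ has two primes $\q_1,\q_2$ above it, then $\a=\bigl(\q_1^{f_{\q_2}}\q_2^{-f_{\q_1}}\bigr)^{h_K^1}$ lies in $P_K^1$ (it is an $h_K^1$-th power) and has norm $1$, so it belongs to $G_K(p)$ but is not supported over $p$. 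In fact $G_K(p)\supset\ker N$, and $\ker N$ has infinite rank because infinitely many rational primes split in $K$ (e.g.\ those splitting completely in the Galois closure); hence $\mathrm{rank}\,G_K(p)=\infty$ for every $p$, and the rank comparison carries no information. Your closing remark was the warning sign: an argument that never uses $[K:\Q]=[L:\Q]$ and recovers the full splitting number $g_K(p)$ is recovering too much, and the degree hypothesis really is needed.

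The repair is to pass from the preimage to the image of the norm: the set $V_K(p)=\{v_p(N(\a))\ |\ \a\in P_K^1\}\subset\Z$ is computable from the pair $(P_K^1,N)$, and this is what the paper uses. It shows that $p$ is non-split in $K$ if and only if $V_K(p)$ contains no value $m$ with $1\leq m<n$: when $p$ is non-split every $\a\in P_K^1$ with $v_p(N(\a))\geq 1$ has $v_p(N(\a))\geq n$, while if $p$ splits one takes a prime $\p_1$ above $p$ with $N(\p_1)=p^{f_1}$, $f_1<n$, and uses the surjectivity of the ray class group $J_K^{\m}/P_K^{\m}\rightarrow J_K/P_K^1$ (for $\m=\prod_i\p_i$) to find $\b$ prime to $p$ with $\b\p_1\in P_K^1$ and $v_p(N(\b\p_1))=f_1$. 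Note that this criterion explicitly involves $n$, which is exactly where the hypothesis $[K:\Q]=[L:\Q]$ enters.
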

\begin{proof}
It suffices to show the equivalence of the following conditions: 
\begin{enumerate}
\item $p$ is non-split in $K$.
\item There does not exist an element $a$ in $K^*_+$ satisfying $1 \leq v_p(N(a)) < n$, where $v_p$ denotes the valuation of $\Q$ at $p$. 
\end{enumerate}

Suppose that $p$ is non-split in $K$. Then any element $a \in K^*_+$ satisfying $1 \leq v_p(N(a))$ is a multiple of $p$ in $K$. 
Hence $n \leq v_p(N(a))$ holds for such $a$. 

Suppose that $p$ splits in $K$ and let $(p)= \prod \p_i^{e_i}$ be the prime decomposition of $p$. 
Put $\p=\p_1$. By assumption, we have $1\leq v_p(N(\p)) < n$. 
Let $\m=\prod \p_i$ and let $J_K^{\m}/P_K^{\m}$ be the ray class group modulo $\m$. 
Since the natural map $J_K^{\m}/P_K^{\m}\rightarrow J_K/P_K^1$ is surjective, 
we can choose a fractional ideal $\b$ that is prime to $(p)$ and satisfies $\b \p \in P_K^1$. 
Then $a= \b \p$ satisfies $1 \leq v_p(N(a)) < n$. 
\end{proof}

\begin{exm}
Two quadratic fields $K,L$ can be distinguished by primes which are non-split in $K$ and $L$, 
because non-splitness of primes can be known by the Legendre symbol (cf.~\cite[Chapter I, Proposition 8.5]{N}, \cite[Chapter VI, Proposition 14]{S}). 
Hence, all Bost-Connes systems for quadratic fields are mutually non-isomorphic. 
This fact can also be obtained by the KMS classification theorem. 
So Theorem~\ref{isom} gives another proof of this fact. 
\end{exm}

\section*{Acknowledgments}
This work was supported by the Program for Leading Graduate Schools, MEXT, Japan and JSPS KAKENHI Grant Number 13J01197. 
The author would like to thank to Gunther Cornelissen, Yasuyuki Kawahigashi, Marcelo Laca and Makoto Yamashita for fruitful conversations. 

\bibliographystyle{amsplain} 
 
\bibliography{ref}

\end{document}